\documentclass{article}

\usepackage[utf8]{inputenc}
\usepackage[T1]{fontenc}

\usepackage{fullpage}
\usepackage{abstract}
\usepackage{indentfirst}
\usepackage{enumerate}
\usepackage{enumitem}
\usepackage[shortcuts]{extdash}	% tirets insécables

%	maths symbols
\usepackage{amsmath}
\usepackage{amsthm}
\usepackage{amsfonts}
\usepackage{MnSymbol}
\usepackage[mathcal]{euscript}			% replace mathcal by prettier mathscr

%	commutative diagrams
\usepackage{tikz-cd}

%	hyperlinks
\usepackage{hyperref}
\usepackage{xcolor}
\hypersetup{
    colorlinks=true,
    linkcolor=[RGB]{13 71 161},		% color internal links
    citecolor=[RGB]{13 71 161},		% color biblio links
    urlcolor=[RGB]{13 71 161}		% color url
}
\usepackage[capitalize,nameinlink]{cleveref}

\newtheorem{thm}{Theorem}[section]
\newtheorem{theorem}[thm]{Theorem}
\newtheorem{proposition}[thm]{Proposition}
\newtheorem{corollary}[thm]{Corollary}
\newtheorem{lemma}[thm]{Lemma}
\theoremstyle{definition}
\newtheorem{definition}[thm]{Definition}
\newtheorem{example}[thm]{Example}
\newtheorem{examples}[thm]{Examples}
\newtheorem{remark}[thm]{Remark}

\newtheorem{warning}[thm]{Warning}
\newtheorem{question}[thm]{Question}

% from https://tex.stackexchange.com/questions/156049/how-can-i-get-cleveref-enumitem-to-print-not-item-e-but-proposition-1
% to get cref to print Example ... (a) etc.
\newlist{exmpenum}{enumerate}{1}
\setlist[exmpenum]{label=\alph*), ref=\theproposition~(\alph*)}
\crefalias{exmpenumi}{example}

%%% COMMANDS

%    blackboard bold alphabet
\newcommand{\NN}{\mathbb{N}}
\newcommand{\RR}{\mathbb{R}}
\newcommand{\ZZ}{\mathbb{Z}}

%    script alphabet
\newcommand{\cA}{\mathcal{A}}

\newcommand{\cC}{\mathcal{C}}
\newcommand{\cD}{\mathcal{D}}
\newcommand{\cE}{\mathcal{E}}
\newcommand{\cF}{\mathcal{F}}

\newcommand{\cP}{\mathcal{P}}

\newcommand{\cR}{\mathcal{R}}
\newcommand{\cS}{\mathcal{S}}

\newcommand{\cU}{\mathcal{U}}

\newcommand\ot\leftarrow
\newcommand\op{^{op}}
\newcommand\oo{$\infty$\=/}
\newcommand\ooo{$(\infty,1)$\=/}
\newcommand\truncated[1]{^{\leq #1}}

\newcommand\slice[1]{_{/#1}}	

\newcommand\surj{\twoheadrightarrow}
\newcommand\Ra{\Rightarrow}
\renewcommand\iff{\Leftrightarrow}
\renewcommand\index{_\bullet}
\newcommand\tower{\truncated\bullet}

\newcommand\hred{\truncated\infty}
\newcommand\infconn{^{(\infty)}}

\DeclareMathOperator*{\colim}{colim}

\renewcommand{\S}[1]{{\cS[#1]}}	% free logos
\newcommand\Fin{\mathrm{\cF in}}
\newcommand\PSp{\mathrm{\cP\cS p}}

\newcommand\pbmark{\ar[dr, phantom, "\ulcorner" very near start, shift right=1ex]}
\newcommand\pbmarkk{\ar[drr, phantom, "\ulcorner" very near start, shift right=1ex]}

\newcommand\Set{\mathrm{\cS et}}

\newcommand\Fun[2]{\mathrm{Fun}\left(#1,#2\right)}

\newcommand\join\star

\newcommand\Tower{\overleftarrow\NN}

\DeclareMathOperator{\Map}{Map}

\newcommand\Post[1]{\mathrm{\cP ost}\left(#1\right)}

\newcommand\CC[1]{\mathsf{CC}_{#1}}
\newcommand\DC[1]{\mathsf{DC}_{#1}}

%factorizations systems

\renewcommand{\S}[1]{{\cS[#1]}}	% free logos

\newcommand{\isConn}{\mathsf{isConn}}

% Comments
\definecolor{mat}{HTML}{FFD6AD}

\definecolor{reid}{HTML}{ADFFD6}

\title{
Choice axioms and Postnikov completeness
\footnote{This material is based upon work supported by the Air Force Office of Scientific Research under award numbers FA9550-20-1-0305 and FA9550-21-1-0009.}
}

\author{
Mathieu Anel
\footnote{
Department of Philosophy, 
Carnegie Mellon University,
\href{mailto:mathieu.anel@protonmail.com}{mathieu.anel@protonmail.com}
}
\and
Reid Barton
\footnote{
Department of Philosophy, 
Carnegie Mellon University,
\href{mailto:barton2@andrew.cmu.edu}{barton2@andrew.cmu.edu}
}
}

\begin{document}

\maketitle

\begin{abstract}
We introduce homotopical variants of the axioms of countable and dependent choice for \oo topoi and use them to give criteria for Postnikov completeness, revisiting a result of Mondal and Reinecke.
\end{abstract}

\setcounter{tocdepth}{1}
\tableofcontents

\section{Introduction}

The axiom of countable choice in a 1\=/topos says that
a countable product of surjections is a surjection.
An example of a 1\=/ topos that does \emph{not} satisfy countable choice
is the topos of sheaves of sets on $[0,1] \subseteq \RR$.
However, by considering the \oo topos of sheaves of spaces on $[0,1]$, we can formulate a weaker condition that does hold:
a countable product of \emph{connected} maps is a surjection.

More generally, we say that \emph{countable choice of dimension $\leq d$} ($\CC d$) holds in an $n$\=/topos $\cE$ ($1\leq n\leq\infty$) if countable products of $(d-1)$\=/connected maps are surjections in $\cE$.
We always have $\CC d \Ra \CC {d+1}$;
the strongest axiom $\CC 0$ is the usual axiom of countable choice.
For each $d \ge 1$, the \oo topos of sheaves on $[0,1]^d$
satisfies $\CC d$ but not $\CC {d-1}$ (\cref{ex:cube}).
It is also possible for $\CC d$ to fail for every $d$, even $d = \infty$ (\cref{prop:noCC}).

In connection with these axioms, we prove the following results.

\begin{theorem}
\label{thm:main}
If $\cE$ is an \oo topos where $\CC d$ holds for some $d<\infty$,
then the hypercompletion and the Postnikov completion of $\cE$ coincide.
\end{theorem}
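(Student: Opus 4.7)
The plan is to reduce the equality of completions to showing $\infty$-connectedness of the Postnikov unit $\eta_X \colon X \to \lim_n P_n X$ on hypercomplete $X$, and then to extract that connectivity from $\CC d$ by analyzing countable tower limits via their expression as fibers of maps between products.

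First I would record the easy inclusion: every Postnikov complete object is hypercomplete. Indeed, $\lim_n P_n X$ is a countable limit of truncated (hence hypercomplete) objects, and hypercomplete objects are closed under all limits in $\cE$. For the reverse inclusion, fix a hypercomplete $X$ and consider $\eta_X$. Both source and target are then hypercomplete, and a map between hypercomplete objects is an equivalence precisely when it is $\infty$-connected. So it suffices to prove that $\eta_X$ is $\infty$-connected, i.e.\ that the fiber $F = \mathrm{fib}(\eta_X) = \lim_n F_n$, with $F_n = \mathrm{fib}(X \to P_n X)$ the $n$-connective cover of $X$, is $\infty$-connected. The tower $(F_n)$ has the key feature that $F_n$ is $n$-connected and each bonding map $F_{n+1} \to F_n$ is $n$-connected.

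The heart of the argument is a tower limit lemma extracted from $\CC d$, of the form: if $\CC d$ holds and $(Y_n)$ is a countable tower in $\cE$ whose bonding maps are $(d-1)$-connected, then the projection $\lim_n Y_n \to Y_0$ is surjective. This is essentially a reformulation of $\CC d$ using the presentation $\lim_n Y_n = \mathrm{fib}\bigl(\prod_n Y_n \to \prod_n Y_n\bigr)$ of a tower limit as the fiber of the shift-versus-identity map between countable products. Applied to the tail $(F_n)_{n \geq N}$ for $N \geq d-1$ it yields surjectivity of $\lim_n F_n \to F_N$. To upgrade this to $m$-connectedness one applies the lemma inside slice and loop-space topoi of $\cE$, which again satisfy $\CC d$; each application trades one level of excess connectivity of the bonding maps for one extra level of connectivity of the projection. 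Because the bonding maps of $(F_n)$ become arbitrarily highly connected, finitely many iterations suffice---crucially using $d < \infty$---to conclude that $\lim_n F_n \to F_N$ is $m$-connected for all $m$, once $N$ is taken large enough; combined with $F_N$ being $N$-connected, this forces $F$ to be $m$-connected for every $m$, hence $\infty$-connected.

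The principal obstacle is this bootstrap step: $\CC d$ provides only surjectivity of countable products, whereas the conclusion requires arbitrarily high connectivity of tower limits. Formulating the tower limit lemma so that it interacts cleanly with slicing and looping, tracking connectivity indices through the iterations, and exploiting the linear growth of connectivity of the Postnikov bonding maps, is where the main technical work lies. The hypothesis $d < \infty$ enters essentially at this point: it caps the number of iterations required to reach any prescribed truncation level, and without it the bootstrap would fail to terminate.
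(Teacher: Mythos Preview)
Your argument has a genuine gap: it proves only half of the equivalence $\cE\truncated\infty \simeq \Post\cE$. The Postnikov completion $\Post\cE$ is not a reflective subcategory of $\cE$ but the category of \emph{formal} Postnikov towers, so there is no ``easy inclusion'' of Postnikov complete objects into hypercomplete ones. What you establish---that $\eta_X$ is an equivalence for hypercomplete $X$---says precisely that the comparison functor $\cE\truncated\infty \to \Post\cE$ is fully faithful (Postnikov \emph{convergence} of $\cE\truncated\infty$). For an equivalence you also need essential surjectivity: every formal Postnikov tower $X_\bullet$ must coincide with the Postnikov tower of its limit (Postnikov \emph{effectivity}). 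The paper explicitly flags (\cref{rem:hc=limpt}, \cref{rem:example}) that convergence is a priori weaker, and whether convergent implies effective is left open. The paper therefore proves effectivity directly (\cref{lem:effectivity}): for an arbitrary formal Postnikov tower $X_\bullet$, the maps $X_{k+n}\to X_n$ are $n$-connected, so the projection $\lim X_\bullet \to X_n$ is $(n-d-1)$-connected, giving $(\lim X_\bullet)\truncated{n-d-1}\simeq X_{n-d-1}$. Your connectivity bootstrap would adapt immediately---you just need to run it on an arbitrary formal tower, not only on the Postnikov tower of a fixed object.

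There is also a technical slip. Your ``tower limit lemma'' as stated---$(d-1)$-connected bonding maps imply $\lim Y_n \to Y_0$ is surjective---is exactly $\DC d$, which does \emph{not} follow from $\CC d$ (the implication fails for $d=0$ and is open for $d>0$). The shift-versus-identity trick does not deliver it: the shift map on $\prod Y_n$ is not a countable product of $(d-1)$-connected maps. What $\CC d$ does give, via the equalizer presentation (\cref{lem:dim}), is a connectivity bound for $\Tower$-limits of highly connected \emph{objects}. In your application the $F_n$ are themselves $n$-connected, so this is what you actually need; phrase the lemma that way and the iterated slicing/looping becomes unnecessary.
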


\begin{theorem}
\label{thm:CC:1-to-infty}
If $\cE_1$ is a 1-topos where $\CC 0$ holds,
then $\CC 0$ also holds in its \oo topos envelope $\cE$.
\end{theorem}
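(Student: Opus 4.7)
My plan is to pick a small site $C$ with $\cE_1 \simeq \Sh(C, \Set)$, realize the envelope as $\cE \simeq \Sh(C, \cS)$, and reduce $\CC 0$ in $\cE$ to a statement about covering sieves via the local characterization of $(-1)$\=/connected maps. Recall that in either topos, a map $f \colon X \to Y$ is $(-1)$\=/connected iff for every $c \in C$ and every section $u \colon h_c \to Y$, there is a covering sieve $R$ on $c$ such that $u|_{h_{c'}}$ lifts through $f$ for every $(c' \to c) \in R$.

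The key lemma to establish is: if $\CC 0$ holds in $\cE_1$, then for every $c \in C$ and every countable family $\{R_n\}_n$ of covering sieves on $c$, the intersection $\bigcap_n R_n$ contains a covering sieve. To prove it, set $A_n := \coprod_{(c' \to c) \in R_n} h_{c'}^\#$; the canonical map $A_n \to h_c^\#$ is an effective epimorphism in $\cE_1$. By $\CC 0$ in $\cE_1$, $\prod_n A_n \to \prod_n h_c^\#$ is also an effective epi; pulling back along the diagonal $h_c^\# \to \prod_n h_c^\#$ gives an effective epi $P \to h_c^\#$, where $P$ denotes the countable fibre product of the $A_n$ over $h_c^\#$. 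Unwinding what it means for a section $c' \to c$ of $h_c^\#$ to lift locally to $P$ — using distributivity of coproducts over products in a 1-topos and closure of sieves under precomposition — shows that such $c' \to c$ must lie in every $R_n$, so that the resulting cover is contained in $\bigcap R_n$.

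Given the lemma, the theorem follows. For effective epis $f_n \colon X_n \to Y_n$ in $\cE$ and a section $u = (u_n) \colon h_c \to \prod_n Y_n$, the local lifting characterization supplies a covering sieve $R_n$ on $c$ on which $u_n$ lifts through $f_n$; the lemma then yields a covering sub-sieve of $\bigcap R_n$, and on any $(c' \to c)$ in this sub-sieve one chooses a lift $\tilde u_n \colon h_{c'} \to X_n$ of $u_n|_{h_{c'}}$ for each $n$ (external choice over $\NN$), assembling into a lift of $u|_{h_{c'}}$ through $\prod f_n$. A homotopy in a product of spaces is the same as a family of componentwise homotopies, so no higher coherence issues arise.

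I expect the main obstacle to lie in the lemma: the passage from $P \to h_c^\#$ being an effective epimorphism to the existence of a covering sub-sieve inside $\bigcap R_n$ is not entirely formal, hinging on a careful analysis of sections of coproduct sheaves in a 1-topos. In particular, this analysis may benefit from choosing the site $C$ with additional rigidity (e.g.\ subcanonical, or such that distinct covering sieves on $c$ correspond to distinct sheaf-theoretic data), ensuring that the topos-theoretic surjectivity of $P \to h_c^\#$ genuinely reflects a single cover contained in $\bigcap R_n$.
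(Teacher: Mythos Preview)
Your reduction of the theorem to the key lemma is correct and the overall strategy is sound, but the proof of the lemma has a real gap. The phrase ``distributivity of coproducts over products in a 1\=/topos'' does not name a valid principle: the comparison map $\coprod_{(i_n)} \prod_n X_{n,i_n} \to \prod_n \coprod_{i_n} X_{n,i_n}$ is generally not an isomorphism. For instance, in sheaves on the Cantor set (a 1\=/topos that does satisfy $\CC 0$), taking $X_{n,0}=X_{n,1}=1$ gives on the left the constant sheaf on the discrete set $2^\NN$ and on the right the sheaf of continuous maps to $2^\NN$ with the product topology; these differ. What you actually need, to pass from a lift $h_{c'}^\# \to A_n$ to the conclusion $(c'\to c)\in R_n$, is that the lift factor through a single summand $h_{c''}^\#$ and that this come from a morphism of $C$; the first requires $h_{c'}^\#$ to be connected and the second requires the site to be subcanonical. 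Neither is automatic, and without them a lift to $P$ only shows $(c'\to c)$ lies \emph{locally} in each $R_n$, which throws you back on the very countable-intersection problem you are trying to solve. The fix is the one you gesture at: choose $C$ to be a small full subcategory of $\cE_1$ closed under finite limits and countable fibre products, with the canonical topology. Then each covering sieve $R_n$ on $c$ contains an effective epimorphism $Y_n\to c$ with $Y_n\in C$, the intersection $\bigcap_n R_n$ contains the sieve generated by $\prod_{/c} Y_n \to c$, and this map is an effective epimorphism by $\CC 0$ in the slice $(\cE_1)\slice c$. With this site the lemma becomes nearly tautological, so you should argue it this way rather than via distributivity.

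For comparison, the paper's proof is site\=/free. Given surjections $A_k\to B_k$ in $\cE$, it covers $\prod B_k$ by some $X\in\cE_1$, pulls each $A_k$ back to $X$, covers each pullback by some $Y_k\in\cE_1$, and applies $\CC 0$ in $\cE_1$ to the surjections $Y_k\to X$ (using that the inclusion $\cE_1\hookrightarrow\cE$ preserves and reflects surjections). The fibre product $\prod_{/X} Y_k$ lies in $\cE_1$, surjects onto $X$ and hence onto $\prod B_k$, and factors through $\prod A_k$, forcing $\prod A_k\to\prod B_k$ to be a surjection. This is essentially your argument run with $C$ the canonical site, but it never names a sieve and so sidesteps the connectedness issue entirely.
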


\Cref{thm:main} says concretely that every formal Postnikov tower in $\cE$ is the Postnikov tower of its limit (see \cref{app:lem:Postnikov}).
\Cref{thm:CC:1-to-infty} implies that this holds in the \oo topos envelope of a 1-topos $\cE_1$ with $\CC 0$.
In \cite{Mondal-Reinecke:Postnikov},
answering a question of \cite{Bhatt-Scholze:proetale},
the authors prove this last fact under the stronger condition that $\cE_1$ is ``replete'', meaning that the composition of a countable tower of surjections is a surjection.
This property is also known as the axiom of \emph{dependent choice}.
We say that the axiom of \emph{dependent choice of dimension $\leq d$} ($\DC d$) holds in an $n$\=/topos if the composition of a countable tower of $(d-1)$\=/connected maps is a surjection (see \cref{def:DC}).
We always have $\DC d\Ra\DC {d+1}$ and $\DC d\Ra \CC d$.
The strongest axiom $\DC 0$ is the repleteness condition above.
The relation $\DC 0\Ra\CC 0$ lets us reprove \cite[Theorem A]{Mondal-Reinecke:Postnikov} in \cref{cor:mondal-reinecke}.

The axioms $\CC{}$ and $\DC{}$ have a tight relationship with the notion of \emph{homotopy dimension} (\cref{def:hd}).
Informally, an ``operation'' in a topos is of homotopy dimension $\leq d$ if it sends $(m+d)$\=/connected maps into $m$\=/connected maps for all $m$.
The axiom $\CC d$ says that countable products are of homotopy dimension $\leq d$ (\cref{def:CC}).
We shall see that $\DC d$ holds in any $n$\=/topos with enough objects of homotopy dimension $\leq d$ (see \cref{def:enough,prop:enough-DCd}).

As an application of \cref{thm:main}, we give in \cref{cor:lhd-PC} a new proof that an \oo topos locally of homotopy dimension $\leq d$ is Postnikov complete \cite[Proposition 7.2.1.10]{Lurie:HTT}.
We also show that \cref{thm:main} is false for $d=\infty$ (\cref{rem:main:infty}), and that the $\CC d$ condition of \cref{thm:main} is not necessary for the hypercompletion and Postnikov completion to agree (\cref{rem:main:infty:2}).

\medskip
In an appendix, we summarize the various convergence properties for Postnikov towers and introduce some new terminology (\cref{app:def:Postnikov}).

\medskip

\paragraph{Acknowledgments.} 
We are thankful to Mike Shulman for his comments on an earlier draft of the paper.

\paragraph{Conventions.}
We set our conventions about categories, $n$\=/topoi and connectivity.
We shall say simply \emph{category} to refer to an \ooo category.
By default, a category is not assumed small.
An \emph{$n$\=/category} is a category whose mapping spaces are all $(n-1)$\=/truncated.
We denote the category of (small) spaces by $\cS$.
For $-1\leq n\leq \infty$, we denote by $\cS\truncated n$ the $(n+1)$\=/category of $n$\=/truncated spaces.
For $n=0$, $\cS\truncated 0=\Set$ is the 1-category of sets, and for $n=\infty$, we put $\cS\truncated \infty=\cS$ by convention.
An $n$\=/category $\cE$ is an \emph{$n$\=/topos} if it is an (accessible) left-exact localization of $\Fun C {\cS\truncated {n-1}}$ for a small $n$\=/category $C$.
(When $n<\infty$, all left-exact localizations of $\Fun C {\cS\truncated {n-1}}$ are accessible \cite[Remark 6.4.1.2]{Lurie:HTT}. This is open for $n=\infty$.)
An \emph{algebraic morphism} (of $n$\=/topoi) $u^*:\cE\to \cF$ is a cocontinuous and left-exact functor.
Such a functor always has a right adjoint $u_*$.

We shall say that a map in an $n$\=/topos is a \emph{surjection}, or that it is \emph{surjective}, if it is an effective epimorphism in the sense of \cite[Section 6.2.3]{Lurie:HTT}.
Recall that for a map $f:A\to B$, its diagonal is the map $\Delta(f):A\to A\times_BA$.
We put $\Delta^0(f) = f$ and $\Delta^{\ell+1}(f) = \Delta(\Delta^\ell(f))$.
For $-2\leq m\leq\infty$, we shall say that a map $f$ is \emph{$m$\=/connected} if, for every $0 \le \ell\leq m+1$, the map $\Delta^\ell(f)$ is a surjection.
Every map is $(-2)$\=/connected, and the $(-1)$\=/connected maps are the surjections.
A map is $m$\=/connected if and only if it is $(m+1)$\=/connective in the sense of \cite[Definition 6.5.1.10]{Lurie:HTT}.
An object $X$ of an $n$\=/topos is called \emph{$m$\=/connected} if $X\to 1$ is $m$\=/connected.
In the case $m=-1$, we shall also say that $X$ is \emph{inhabited}.

A map $f$ is \emph{$m$\=/truncated} if the diagonal $\Delta^{m+2}(f)$ is invertible.
The $(-2)$\=/truncated maps are the isomorphisms, and the $(-1)$\=/truncated maps are the monomorphisms.
For simplicity, we shall say that a map $f$ is \emph{\oo truncated} (rather than \emph{hypercomplete}) if it is right orthogonal to \oo connected maps.
For each $-2\leq m\leq\infty$, the classes of $m$\=/connected maps and $m$\=/truncated maps form a factorization system which is stable under base change (see \cite[Example 5.2.8.16 and Remark 6.5.2.21]{Lurie:HTT}, \cite[Proposition 3.3.6]{ABFJ:GBM}, or \cite[Lemma 2.2.22]{ABFJ:GT}).
We denote the $m$\=/truncation of an object $X$ by $X\truncated m$
(\cite{Lurie:HTT} uses the notation $\tau_{\le m} X$).

For $1\leq m\leq n\leq\infty$, the $m$\=/category of $(m-1)$\=/truncated objects in an $n$\=/topos is an $m$\=/topos \cite[Proposition 6.4.5.4]{Lurie:HTT}.
The \emph{$n$\=/topos envelope} of an $m$\=/topos is the left adjoint to this truncation operation \cite[Proposition 6.4.5.7]{Lurie:HTT}.
The original $m$\=/topos is the full subcategory of $(m-1)$\=/truncated objects of its $n$\=/topos envelope.
For $\cE$ an \oo topos, the \emph{hypercompletion} $P_\infty : \cE \to \cE\truncated\infty
$ is the localization inverting the class of \oo connected maps \cite[Section~6.5.2]{Lurie:HTT}.

\section{Choice axioms}

\subsection{Homotopy dimension}

We will formulate the axiom $\CC d$ in terms of the notion of homotopy dimension.

\begin{definition}[Homotopy dimension]
\label{def:hd}
Let $u^*:\cE\to \cF$ be an algebraic morphism of $n$\=/topoi and $u_*$ its right adjoint.
For $-1\leq d\leq \infty$, we shall say that $u^*$ is of \emph{homotopy dimension $\leq d$}
if for any $(d-1)$\=/connected map $A\to B$ in $\cF$, the map $u_*A\to u_*B$ is a surjection in $\cE$.
In the case that $\cE=\cS^{\le n-1}$ and $u^*:\cS^{\le n-1}\to \cF$ is the unique algebraic morphism of $n$\=/topoi, we shall say simply that $\cF$ is of homotopy dimension $\leq d$.
\end{definition}

The following lemma shows that the previous definition is equivalent to \cite[Definition 7.2.1.6]{Lurie:HTT}.

\begin{lemma}
\label{lem:decalage}
The morphism $u^* : \cE \to \cF$ is of homotopy dimension $\leq d$ if and only if, for every $m\geq -2$ and every $(m+d)$\=/connected map $A\to B$ in $\cF$, the map $u_*A\to u_*B$ is $m$\=/connected in $\cE$.
\end{lemma}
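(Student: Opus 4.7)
The ``if'' direction is immediate: specializing the displayed condition to $m=-1$ recovers exactly \cref{def:hd}, since $(-1)$\=/connected is the same as surjective.

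For the ``only if'' direction, the plan is to reduce the statement about $m$\=/connectedness of $u_*f$ to a statement about surjectivity of various iterated diagonals, and then apply the hypothesis to each of those diagonals. Concretely, suppose $f:A\to B$ is $(m+d)$\=/connected in $\cF$. To show $u_*f$ is $m$\=/connected in $\cE$, by definition it suffices to show that $\Delta^\ell(u_*f)$ is surjective for each $0\leq \ell\leq m+1$. Since $u_*$, being a right adjoint, preserves all limits, it preserves fiber products and therefore commutes with forming diagonals: $\Delta^\ell(u_*f)\cong u_*(\Delta^\ell f)$. So it is enough to show that $u_*(\Delta^\ell f)$ is surjective for $0\leq \ell\leq m+1$.

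The key input is the standard ``décalage'' fact (from which the lemma presumably takes its name): if $g:X\to Y$ is $c$\=/connected, then $\Delta(g):X\to X\times_Y X$ is $(c-1)$\=/connected. This is immediate from the definition of connectedness in terms of iterated diagonals, since $\Delta^\ell(\Delta g) = \Delta^{\ell+1}(g)$. Iterating, $\Delta^\ell(f)$ is $(m+d-\ell)$\=/connected. For $0\leq\ell\leq m+1$ we have $m+d-\ell\geq d-1$, so each $\Delta^\ell(f)$ is in particular $(d-1)$\=/connected.

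Now applying the hypothesis (the homotopy dimension $\leq d$ condition of \cref{def:hd}) to each $(d-1)$\=/connected map $\Delta^\ell(f)$ yields that $u_*(\Delta^\ell f)$ is surjective, which by the previous paragraph gives surjectivity of $\Delta^\ell(u_* f)$ for all $0\leq\ell\leq m+1$, whence $u_*f$ is $m$\=/connected. The only step requiring genuine care is the décalage fact in the previous paragraph, but this is a routine consequence of unwinding the iterated-diagonal definition of connectedness, so no real obstacle is expected.
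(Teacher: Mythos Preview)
Your proof is correct and follows essentially the same route as the paper: take $m=-1$ for the ``if'' direction, and for the converse use that $u_*$ commutes with diagonals together with the d\'ecalage fact that $\Delta^\ell(f)$ is $(d-1)$\=/connected for $0\le\ell\le m+1$ when $f$ is $(m+d)$\=/connected. The paper compresses the d\'ecalage step into the single observation that $f$ is $(m+d)$\=/connected iff each $\Delta^\ell(f)$ is $(d-1)$\=/connected for $0\le\ell\le m+1$, but this is exactly what you unwind.
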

\begin{proof}
For the ``if'' direction, take $m=-1$.
For the converse, a map $f$ is $(m+d)$\=/connected if and only if $\Delta^\ell(f)$ is $(d-1)$\=/connected for every $0\leq \ell\leq m+1$.
Then $u_*(\Delta^\ell(f))$ is a surjection for $0\leq \ell\leq m+1$.
Using that $u_*\Delta(f) = \Delta(u_*f)$, we deduce that $u_*f$ is $m$\=/connected.
\end{proof}

\begin{examples}
\begin{exmpenum}
\item 
Recall from \cite[Theorem 7.2.3.6]{Lurie:HTT} that the \oo topos of sheaves on a topological space of covering dimension $\leq d$ is of homotopy dimension $\leq d$.
In particular, the \oo topos of sheaves on the cube $[0,1]^d$ is of 
homotopy dimension $\leq d$.

\item
In an $n$\=/topos for $n<\infty$, every $(n-1)$\=/connected map is invertible, and so all $n$\=/topoi and algebraic morphisms between $n$\=/topoi are of homotopy dimension $\leq n$.
(Beware the $m$-topos envelope of an $n$-topos, for $m>n$, may have a strictly larger homotopy dimension.)

\item
If an \oo topos is of homotopy dimension $\le \infty$, then for every \oo connected object $X$, the space of global sections of $X$ is $m$\=/connected for every $m$ by \cref{lem:decalage}, and therefore contractible.
This condition is in fact an equivalence by \cref{lem:hdim-le-iff} below.
We shall give examples of \oo topoi not of homotopy dimension $\le \infty$ in \cref{sect:noCC}.

\end{exmpenum}
\end{examples}

\begin{definition}
\label{def:external-dim}
Let $\cE$ be an $n$\=/topos.
We say that an object $X \in \cE$ is \emph{(externally) of homotopy dimension~$\le d$}
if the topos $\cE\slice X$ is of homotopy dimension $\le d$.
\end{definition}

Recall that a morphism of $\cE\slice X$ is $m$\=/connected
if and only if its underlying morphism in $\cE$ is
\cite[Proposition 6.5.1.19]{Lurie:HTT}.
The following lemma is essentially \cite[Lemma 7.2.1.7]{Lurie:HTT}.

\begin{lemma}
\label{lem:hdim-le-iff}
Let $\cE$ be an $n$\=/topos and $X$ an object of $\cE$.
The following are equivalent:
\begin{enumerate}
    \item\label{lem:hdim-le-iff:le}
    $X$ is of homotopy dimension $\le d$.
    \item\label{lem:hdim-le-iff:section}
    Every $(d-1)$\=/connected map $A \to X$ of $\cE$
    admits a section.
    \item\label{lem:hdim-le-iff:lift}
    For every $(d-1)$\=/connected map $A \to B$ of $\cE$,
    any map $X \to B$ admits a lift to $A$.
    \item\label{lem:hdim-le-iff:section-m}
    For every $m \ge -2$,
    the space of sections of any $(m+d)$\=/connected map of $\cE$
    is $m$\=/connected.
\end{enumerate}
\end{lemma}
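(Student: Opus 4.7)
The plan is to pass through the slice topos $\cE\slice X$ and use \cref{lem:decalage} to convert~(1) directly into~(4), then derive~(2) from~(4) as the case $m = -1$, handle $(2) \Leftrightarrow (3)$ by a standard pullback manoeuvre, and close the cycle $(2) \Rightarrow (1)$ by running the analogous pullback argument inside $\cE\slice X$.

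For $(1) \Leftrightarrow (4)$, I would first identify the right adjoint $u_*$ of the unique algebraic morphism $u^* : \cS\truncated{n-1} \to \cE\slice X$ with the space-of-sections functor $(A \to X) \mapsto \Map_X(X, A)$, so that~(1) reads as ``$u^*$ is of homotopy dimension $\le d$''. \Cref{lem:decalage} then rephrases~(1) as: every $(m+d)$-connected morphism in $\cE\slice X$ induces an $m$-connected map of section spaces. Taking the target to be the terminal object $\mathrm{id}_X$ (whose section space is contractible), and using the remark preceding the statement to identify connectivity in $\cE\slice X$ with connectivity of the underlying morphism in $\cE$, gives~(4). For the reverse direction, I would compute fibers: the fiber of $\Map_X(X, A) \to \Map_X(X, B)$ at a section $s : X \to B$ identifies with the section space of the pullback $X \times_B A \to X$, which is again $(m+d)$-connected by base-change stability of the connected/truncated factorization system, so $m$-connected by~(4); all fibers being $m$-connected then forces the map itself to be $m$-connected.

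The remaining implications are quick. $(4) \Rightarrow (2)$ is the case $m = -1$, since an inhabited section space is exactly a section. For $(2) \Leftrightarrow (3)$, the implication $(3) \Rightarrow (2)$ is the special case $B = X$ with $\mathrm{id}_X : X \to B$; and $(2) \Rightarrow (3)$ proceeds by pulling back a $(d-1)$-connected $A \to B$ along any $X \to B$ to obtain a $(d-1)$-connected $X \times_B A \to X$, applying~(2), and composing the resulting section with $X \times_B A \to A$ to produce the required lift. Finally, $(2) \Rightarrow (1)$: given a $(d-1)$-connected morphism in $\cE\slice X$, every section of its codomain pulls back to a $(d-1)$-connected map into $X$, which admits a section by~(2), yielding $\pi_0$-surjectivity of the induced map on section spaces.

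The main obstacle will be the bookkeeping in the $(1) \Leftrightarrow (4)$ step: recognizing $u_*$ as the section-space functor and matching connectivity in $\cE\slice X$ with connectivity in $\cE$. Once those translations are in hand, the rest of the argument is routine fiber and pullback manipulation.
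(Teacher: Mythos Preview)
Your argument is correct and uses the same ingredients as the paper's proof (\cref{lem:decalage}, base-change stability of connected maps, and the identification of $u_*$ with the section-space functor), just organized differently: the paper runs the short cycle $(1)\Rightarrow(2)\Rightarrow(3)\Rightarrow(1)$ first and then obtains $(2)\Leftrightarrow(4)$ from \cref{lem:decalage}, whereas you establish $(1)\Leftrightarrow(4)$ directly. Your fiber computation for $(4)\Rightarrow(1)$ is valid but redundant, since you already have $(4)\Rightarrow(2)\Rightarrow(1)$; dropping it would streamline the argument to match the paper's economy.
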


\begin{proof}
    \eqref{lem:hdim-le-iff:le} $\Ra$ \eqref{lem:hdim-le-iff:section}:
    A $(d-1)$\=/connected map $f : A \to X$ of $\cE$
    amounts to a $(d-1)$\=/connected object of $\cE\slice X$,
    and a section of $f$ amounts to a global section of this object.
    Then the claim follows from the definition of homotopy dimension
    (with $B$ the terminal object of $\cE\slice X$).
    \eqref{lem:hdim-le-iff:section} $\Ra$ \eqref{lem:hdim-le-iff:lift}
    because the pullback of $A \to B$ along $X \to B$
    is again $(d-1)$\=/connected,
    and a section of this pullback is the same as
    a lift of the map $X \to B$ to $A$.
    \eqref{lem:hdim-le-iff:lift} $\Ra$ \eqref{lem:hdim-le-iff:le}:
    Let $A \to B$ be a $(d-1)$\=/connected map of $\cE\slice X$.
    By \eqref{lem:hdim-le-iff:lift}, any global section of $B$
    (i.e., section in $\cE$ of the structural map $B \to X$)
    lifts to some global section of $A$.
    Finally, \eqref{lem:hdim-le-iff:section} $\iff$ \eqref{lem:hdim-le-iff:section-m} by \cref{lem:decalage}.
\end{proof}

In particular, in a 1-topos, an object is of homotopy dimension $\le 0$
if and only if it is (externally) projective in the usual sense, as the $(-1)$\=/connected maps are the surjections.
(In \cite{Bhatt-Scholze:proetale}, projective objects are called ``weakly contractible''.)

\begin{remark}
\label{rmk:internal-hdim}
\Cref{def:external-dim} has a corresponding internal notion:
an object $X$ of an $n$\=/topos $\cE$ is
\emph{internally of homotopy dimension $\le d$}
if the algebraic morphism $X\times-:\cE \to \cE\slice X$
is of homotopy dimension $\le d$.
Informally, this means (using \cref{lem:decalage}) that
$X$\=/indexed dependent products in $\cE$
reduce connectivity by at most $d$.
When $d = 0$ and $n = 1$, the two conditions of being (internally) of homotopy dimension $\le d$
reduce to the conditions of being (internally) projective
(see \cite[Exercises IV.15--16]{MLM}).
\end{remark}

\medskip
In the proof of \cref{thm:main}, we will use the following auxiliary definition.
Let $K$ be an $n$\=/category (in particular, $K$ could be a set) and $\cE$ an $n$\=/topos.
The $K$\=/limit functor $\lim_K:\cE^K\to \cE$ is the right adjoint to an algebraic morphism $\Delta_K:\cE\to \cE^K$ (the constant diagram functor).

\begin{definition}[Homotopy $\cE$\=/dimension]
\label{def:hEd}
We shall say that $K$ is of \emph{homotopy $\cE$\=/dimension $\leq d$} if the algebraic morphism $\Delta_K:\cE\to \cE^K$ is of homotopy dimension $\leq d$.
\end{definition}

Below, a ``$K$\=/limit of maps'' refers to the limit of a $K$\=/indexed diagram in the arrow category.

\begin{lemma}
\label{lem:equiv-HD}
The following statements are equivalent:
\begin{enumerate}
\item\label{lem:equiv-HD:0} $K$ is of homotopy $\cE$\=/dimension $\leq d$.
\item\label{lem:equiv-HD:1} In $\cE$, $K$\=/limits of $(d-1)$\=/connected maps are surjections.
\item\label{lem:equiv-HD:2} In $\cE$, for every $m\geq -2$, $K$\=/limits of $(m+d)$\=/connected maps are $m$\=/connected maps.
\item\label{lem:equiv-HD:4} For every $E$ in $\cE$, $K$ is of homotopy $\cE\slice E$\=/dimension $\leq d$.
\item\label{lem:equiv-HD:5} There exists a surjection $E\surj 1$ such that $K$ is of homotopy $\cE\slice E$\=/dimension $\leq d$.
\item\label{lem:equiv-HD:1:obj} In every slice $\cE\slice E$, $K$\=/limits of $(d-1)$\=/connected objects are inhabited.
\item\label{lem:equiv-HD:2:obj} In every slice $\cE\slice E$, for every $m\geq -2$, $K$\=/limits of $(m+d)$\=/connected objects are $m$\=/connected in $\cE$.
\end{enumerate}
\end{lemma}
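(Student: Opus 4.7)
The plan is to organize the seven conditions around the direct equivalence $(1)\iff(2)\iff(3)$, with most others reducing formally and only a single substantive argument (for $(6)\Ra(1)$) needed. The equivalence $(1)\iff(2)$ is a direct unpacking of \cref{def:hEd}, using that $\Delta_K$ is left adjoint to $\lim_K$ and that a map in the $n$\=/topos $\cE^K$ is $(d-1)$\=/connected if and only if it is pointwise so. The equivalence $(2)\iff(3)$, and later $(6)\iff(7)$ in every slice, follow from \cref{lem:decalage} applied to $\Delta_K$. Finally $(4)\Ra(1)$ and $(4)\Ra(5)$ are obtained by taking $E=1$, and $(7)\Ra(6)$ by restricting to maps into the terminal object.

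For $(1)\Ra(4)$, I would argue by base change. For any $E\in\cE$, there is an algebraic morphism $\Delta_K^E:\cE\slice E\to(\cE\slice E)^K\simeq\cE^K\slice{\Delta_K E}$ whose right adjoint sends a $K$\=/diagram $F\to\Delta_K E$ to $\lim_K F\to E$, computed as in $\cE$. A $(d-1)$\=/connected map of $K$\=/diagrams in $\cE\slice E$ is, as a map in $\cE^K$, still pointwise $(d-1)$\=/connected, so $(1)$ yields a surjection $\lim_K A\to\lim_K B$ in $\cE$, which is equivalently a surjection in $\cE\slice E$. For $(5)\Ra(1)$, I would use descent: pulling a $(d-1)$\=/connected map $A\to B$ in $\cE^K$ back along $E\to 1$ (with $E\surj 1$ the hypothesized cover) yields a $(d-1)$\=/connected map $A\times E\to B\times E$ in $(\cE\slice E)^K$; since $K$\=/limits commute with the pullback functor $-\times E$, the slice-limit of this map is $(\lim_K A)\times E\to(\lim_K B)\times E$; by hypothesis this is a surjection in $\cE\slice E$, hence in $\cE$, and surjectivity descends along the effective epimorphism $E\surj 1$.

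The only non\=/formal step is $(6)\Ra(1)$; all other remaining implications follow from this together with the previous ones applied in each slice. Given a $(d-1)$\=/connected map $A\to B$ in $\cE^K$, set $E:=\lim_K B$ and pull back $A\to B$ along the counit $\Delta_K E\to B$ to form $A':=A\times_B \Delta_K E$ in $\cE^K\slice{\Delta_K E}\simeq(\cE\slice E)^K$. Since base change preserves connectivity, $A'\to\Delta_K E$ is $(d-1)$\=/connected, i.e., $A'$ is a $(d-1)$\=/connected object of $(\cE\slice E)^K$, so $(6)$ applied in $\cE\slice E$ forces $\lim_K A'$ to be inhabited in $\cE\slice E$. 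Because $K$\=/limits commute with pullbacks, $\lim_K A'=(\lim_K A)\times_{\lim_K B}E=\lim_K A$ with structural map $\lim_K A\to E=\lim_K B$, so this map is a surjection in $\cE$.

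The main bookkeeping hurdle I expect is keeping track of the identification $(\cE\slice E)^K\simeq\cE^K\slice{\Delta_K E}$ and the resulting compatibility of limits and connectivity with slicing; once these facts are in place, everything else is carried by \cref{lem:decalage}, the pointwise character of connectivity in $\cE^K$, and the standard facts that surjections are stable under base change and local on an effective epimorphism.
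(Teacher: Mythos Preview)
Your proposal follows essentially the same route as the paper: the same logical skeleton $(1)\iff(2)\iff(3)$ via \cref{lem:decalage}, the same descent argument for $(5)\Ra(1)$, and the same key step for $(6)\Ra(1)$ (pull back along the counit $\Delta_K(\lim B)\to B$ and recognize the slice limit as $\lim A$; the paper states this as $A=\lim_{/B}A'_k$).

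One genuine imprecision to fix in $(1)\Ra(4)$: the right adjoint to $\Delta_K^E$ does \emph{not} send $F\to\Delta_K E$ to $\lim_K F\to E$ ``computed as in $\cE$''. The forgetful functor $\cE\slice E\to\cE$ does not preserve $K$-limits; the slice limit is the base change $(\lim_K^\cE F)\times_{E^{|K|}}E$ along the diagonal $E\to E^{|K|}$, which is exactly the pullback diagram the paper draws. Your conclusion survives because surjections are stable under this base change, but as written the sentence ``which is equivalently a surjection in $\cE\slice E$'' identifies the wrong map. The same subtlety lurks in your justification ``$K$-limits commute with pullbacks'' for $\lim_K A'=\lim_K A$: if the limit is taken in $\cE$ you get $(\lim A)\times_E E^{|K|}$, not $\lim A$; it is only the \emph{slice} limit that equals $\lim A\to E$, and that needs the diagonal base change again (or the direct Yoneda check the paper implicitly uses). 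You already flagged this as the main bookkeeping hurdle---it is, and it's the one place where what you wrote is not literally correct.
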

\begin{proof}
\eqref{lem:equiv-HD:0}$\iff$\eqref{lem:equiv-HD:1} by definition.
We show the remaining statements are equivalent to \eqref{lem:equiv-HD:1}.
\eqref{lem:equiv-HD:1}$\iff$\eqref{lem:equiv-HD:2} is a consequence of \cref{lem:decalage}.
\noindent \eqref{lem:equiv-HD:4}$\Ra$\eqref{lem:equiv-HD:1} is clear; let us see the converse.
Let $E$ be an object in $\cE$ and $A_k\to B_k\to E$ be a $K$\=/indexed diagram of $(d-1)$\=/connected maps in $\cE\slice E$.
We denote by $\lim\slice E A_k \to \lim\slice E B_k$ their limit in $\cE\slice E$.
This limit can be computed as a base change of the limit in $\cE$:
\[
\begin{tikzcd}
\lim\slice E A_k \ar[r]\ar[d] \pbmark & \lim\slice E B_k \ar[d]\ar[r] \pbmark & E \ar[d,"\Delta"]\\
\lim A_k \ar[r] & \lim B_k \ar[r] & \lim E \rlap{${} = E^{|K|}$}
\end{tikzcd}
\]
where $|K|$ is the groupoidification of $K$.
If $\lim A_k \to \lim B_k$ is a surjection, then so is $\lim\slice E A_k \to \lim\slice E B_k$ by base change.
This shows \eqref{lem:equiv-HD:1}$\Ra$\eqref{lem:equiv-HD:4}.
\eqref{lem:equiv-HD:1}$\Ra$\eqref{lem:equiv-HD:5} is clear,
and \eqref{lem:equiv-HD:5}$\Ra$\eqref{lem:equiv-HD:1} holds because
$E \times - : \cE \to \cE\slice E$ preserves limits and preserves and reflects connectivity of maps.
\eqref{lem:equiv-HD:4}$\Ra$\eqref{lem:equiv-HD:2:obj} follows from \cref{lem:decalage}, and \eqref{lem:equiv-HD:2:obj}$\Ra$\eqref{lem:equiv-HD:1:obj} is clear, so it remains to show \eqref{lem:equiv-HD:1:obj}$\Ra$\eqref{lem:equiv-HD:1}.
Let $A_k\to B_k$ be a $K$\=/indexed diagram of $(d-1)$\=/connected maps in $\cE$.
We put $A=\lim A_k$, $B=\lim B_k$ and $A'_k=A_k\times_{B_k}B$.
The maps $A'_k\to B$ are $(d-1)$\=/connected by base change.
By assumption on $\cE\slice B$, the object $\lim\slice B A'_k\to B$ is inhabited.
The conclusion follows from the remark that $A=\lim\slice B A'_k$.
\end{proof}

\medskip
The rest of this section contains results that will be useful to verify that a topos satisfies the choice axioms $\CC d$ or $\DC d$.

\begin{definition}[Enough objects]
\label{def:enough}
An $n$\=/topos has \emph{enough objects of homotopy dimension $\leq d$} if every object $X$ admits a covering family $(X_i\to X)$ where each $X_i$ is of homotopy dimension $\leq d$.
Recall from \cite[Definition 7.2.1.8]{Lurie:HTT} that an $n$\=/topos is \emph{locally of homotopy dimension $\leq d$} if it is generated under colimits by its subcategory of objects of homotopy dimension $\leq d$.
\end{definition}

\begin{example}
A 1\=/topos has enough objects of homotopy dimension $\leq 0$ if and only if it has enough (externally) projective objects in the usual sense.
\end{example}

\begin{remark}
\label{rem:enough}
If an $n$\=/topos is locally of homotopy dimension $\leq d$, then it has enough objects of homotopy dimension $\leq d$.
For $n<\infty$, the converse is also true.
For every object $X$, the existence of enough objects of homotopy dimension $\leq d$ lets us build a (semi-simplicial) hypercover $X\index$ of $X$ by objects of homotopy dimension $\leq d$.
By the analogue of \cite[Corollary A.5.3.3]{Lurie:SAG} for $n$\=/topoi, the colimit of this hypercover gives an isomorphism $|X\index|=X$, so the objects of homotopy dimension $\leq d$ generate the topos under colimits.

For $n=\infty$, however, the converse is false.
In the \oo topos of parametrized spectra $\PSp$, any set equipped with the zero spectrum is of homotopy dimension $\leq 0$.
Every object of $\PSp$ admits a surjection from such an object, so $\PSp$ has enough objects of homotopy dimension $\leq 0$.
This also means (by \cref{lem:hdim-le-iff}) that \emph{any} object of homotopy dimension $\leq 0$ must be a retract of such an object, hence itself a set equipped with the zero spectrum.
These objects only generate the subcategory $\cS\subseteq\PSp$ under colimits.
\end{remark}

\begin{warning}
An \oo topos of homotopy dimension $\le d$ need not be locally of homotopy dimension $\le d$.
By the previous remark, the \oo topos of parameterized spectra is a counterexample (with $d = 0$).
\end{warning}

\begin{lemma}
\label{lem:reflect-surj}
Let $\cE_n$ be an $n$\=/topos and $\cE$ its \oo topos envelope.
Then the inclusion functor $\cE_n\hookrightarrow \cE$ preserves and reflects surjections.
\end{lemma}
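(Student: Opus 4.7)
The plan is to show that the image factorization $A \surj \im f \hookrightarrow B$ of a map $f : A \to B$ in $\cE_n$ is computed identically whether one works in $\cE_n$ or in $\cE$. Since $f$ is a surjection precisely when $\im f \hookrightarrow B$ is invertible, both preservation and reflection of surjections will follow at once.

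The first step, and what I expect to be the main content, is the identification of the poset of subobjects of $B$ in $\cE_n$ with the poset of subobjects of $B$ in $\cE$. The inclusion $\iota : \cE_n \hookrightarrow \cE$ is fully faithful and preserves limits (being right adjoint to the $(n-1)$\=/truncation functor), so it preserves and reflects monomorphisms between objects of $\cE_n$. Conversely, any monomorphism $U \hookrightarrow B$ in $\cE$ with $B \in \cE_n$ has $U \in \cE_n$: the composite $U \to B \to 1$ is a composition of two $(n-1)$\=/truncated maps (the first because any monomorphism is $(n-1)$\=/truncated for $n \ge 1$, since iterating the diagonal of an invertible map keeps it invertible; the second because $B$ is $(n-1)$\=/truncated), so $U$ is $(n-1)$\=/truncated.

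The second step is the standard characterization of the image: in either $\cE_n$ or $\cE$, $\im f$ is the smallest subobject of $B$ through which $f$ factors, the unique comparison $\im f \to U$ against any competitor $U \hookrightarrow B$ being supplied by the orthogonality between $(-1)$\=/connected and $(-1)$\=/truncated maps. Since both the poset of subobjects of $B$ and the factoring condition on $f$ are intrinsic data that agree in $\cE_n$ and $\cE$ by the previous paragraph, the subobject $\im f \hookrightarrow B$ is the same in both topoi. Hence $\im f \to B$ is invertible in $\cE$ if and only if it is invertible in $\cE_n$, giving the claim.
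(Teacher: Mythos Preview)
Your proof is correct and takes a genuinely different route from the paper. The paper argues preservation by invoking the construction of $\cE$ as sheaves for the canonical topology on a small site inside $\cE_n$ (so a surjection in $\cE_n$ is a cover, hence a surjection in $\cE$), and argues reflection via the characterization of surjections as colimits of their \v Cech nerves, using that $\cE_n \subseteq \cE$ is closed under finite limits and that the colimit in $\cE_n$ is the $(n-1)$\=/truncation of the colimit in $\cE$. You instead compare the (surjection, monomorphism) factorizations directly: once the subobject posets of $B$ in $\cE_n$ and in $\cE$ are identified, the image---as the least subobject through which $f$ factors---is forced to agree, and both directions follow at once. Your argument is more elementary and self-contained, needing only that $\iota$ is fully faithful, right adjoint, and has essential image the $(n-1)$\=/truncated objects; it avoids any explicit model of the envelope. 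The paper's approach, on the other hand, makes direct contact with the site-theoretic description of $\cE$, which is the viewpoint used in the surrounding lemmas (e.g., to cover arbitrary objects of $\cE$ by objects of $\cE_n$).
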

\begin{proof}
Recall that $\cE$ can be constructed as the category of higher sheaves on a small full subcategory $\cC \subseteq \cE_n$ for the canonical topology.
We may choose $\cC$ large enough to contain any particular surjection $f : A \to B$ of $\cE_n$.
(See the proofs of \cite[Propositions 6.4.3.6 and 6.4.5.7]{Lurie:HTT}.)
Then the image of $f$ in $\cE$ is also a surjection, by the definition of the canonical topology.
On the other hand, both in $\cE_n$ and in $\cE$, the surjections are characterized as maps that are the colimit of their \v Cech nerve \cite[Corollary 6.2.3.5]{Lurie:HTT}.
As $\cE_n \subseteq \cE$ is the full subcategory of $(n-1)$\=/connected objects, it is closed under finite limits and therefore this embedding commutes with the construction of the \v Cech nerve.
So if $f : A \to B$ in $\cE_n$ is such that $B$ is the colimit of the \v Cech nerve of $f$ in $\cE$, then this must also be the case in $\cE_n$.
\end{proof}

A similar argument also shows that every object $X$ in $\cE$ admits a surjection $Y\to X$ with $Y$ in $\cE_n$, because $\cE_n \subseteq \cE$ is closed under arbitrary coproducts.

\begin{lemma}
\label{lem:enough-0}
If an $n$\=/topos $\cE_n$ has enough objects of homotopy dimension $\leq 0$,
then the \oo topos envelope $\cE$ of $\cE_n$ also has enough objects of homotopy dimension $\leq 0$.
\end{lemma}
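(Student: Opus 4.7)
The plan is to use the remark following \cref{lem:reflect-surj} twice, which lets us reduce to the hypothesis on $\cE_n$.

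Given $X \in \cE$, we first produce a surjection $Y \twoheadrightarrow X$ with $Y \in \cE_n$ by that remark. Since $\cE_n$ has enough objects of homotopy dimension $\leq 0$, there is a covering family $(Y_i \to Y)$ in $\cE_n$ with each $Y_i$ of homotopy dimension $\leq 0$ in $\cE_n$; this family remains a covering in $\cE$ by \cref{lem:reflect-surj}, so the composites $(Y_i \to X)$ form a covering family of $X$ in $\cE$.

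It remains to check that each $Y_i$ is of homotopy dimension $\leq 0$ in $\cE$. By \cref{lem:hdim-le-iff}, this amounts to verifying that every surjection $A \twoheadrightarrow Y_i$ in $\cE$ admits a section. Here we apply the remark after \cref{lem:reflect-surj} a second time: choose a surjection $Z \twoheadrightarrow A$ with $Z \in \cE_n$. The composite $Z \to A \to Y_i$ is then a surjection in $\cE$ between objects of $\cE_n$, hence a surjection in $\cE_n$ by \cref{lem:reflect-surj}. By the assumed $\cE_n$-projectivity of $Y_i$, there is a section $s : Y_i \to Z$, and the composite $Y_i \xrightarrow{s} Z \to A$ is then a section of $A \to Y_i$ in $\cE$ (because $Z \to Y_i$ factors as $Z \to A \to Y_i$).

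We do not expect a real obstacle along this route. The key observation is that one should not attempt to transfer projectivity of $Y_i$ from $\cE_n$ to $\cE$ by a direct obstruction-theoretic argument at the Postnikov tower of $A$ (which would demand significantly stronger hypotheses, such as vanishing of higher $\cE$-cohomology of $Y_i$); instead, the remark following \cref{lem:reflect-surj} allows us to replace any given surjection $A \twoheadrightarrow Y_i$ in $\cE$ by a surjection between $\cE_n$-objects, after which the $\cE_n$-level hypothesis applies directly.
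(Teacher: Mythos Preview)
Your proof is correct and follows the same route as the paper's: cover $X$ by an object of $\cE_n$, then by objects of homotopy dimension $\le 0$ in $\cE_n$, and transfer projectivity to $\cE$ by replacing an arbitrary surjection $A \twoheadrightarrow Y_i$ with one from an object of $\cE_n$ via the remark after \cref{lem:reflect-surj}. The only cosmetic difference is that the paper takes a single surjection $P \to Y$ (implicitly the coproduct of a covering family) rather than working with the family $(Y_i \to Y)$ directly.
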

\begin{proof}
Choose a surjection $Y\to X$ in $\cE$ with $Y$ in $\cE_n$, and then a surjection $P\to Y$ in $\cE_n$, where $P$ is an object of homotopy dimension $\leq 0$.
The composition $P\to Y\to X$ is a surjection in $\cE$;
we have to verify that the object $P$ is still of homotopy dimension $\leq 0$ in $\cE$.
Let $f:Z\to P$ be a surjection in $\cE$. 
Choose a surjection $T\to Z$ with $T$ in $\cE_n$.
The composition $T\to Z\to P$ is a surjection in $\cE$ between objects of $\cE_n$.
By \cref{lem:reflect-surj}, it is a surjection in $\cE_n$.
Since $P$ is of homotopy dimension $\leq 0$ in $\cE_n$, the map $T\to P$ (and therefore the map $Z\to P$) admits a section.
This shows that $P$ is of homotopy dimension $\leq 0$ in $\cE$.
\end{proof}

\begin{remark}
\label{rem:enough-d}
For $0<d<n$, we do not know if the \oo topos envelope of an $n$\=/topos with enough objects of homotopy dimension $\leq d$ has enough objects of homotopy dimension $\leq d$ in general.
\end{remark}

\begin{lemma}
\label{lem:lochd-hc}
Any \oo topos $\cE$ locally of homotopy dimension $\leq d$ is hypercomplete.
\end{lemma}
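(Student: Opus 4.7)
The plan is to prove hypercompleteness by showing that every $\infty$-connected morphism $f : X \to Y$ in $\cE$ is an isomorphism, reducing first to the case $Y = 1$. I view such an $f$ as an object of $\cE\slice Y$, where it remains $\infty$-connected by \cite[Proposition 6.5.1.19]{Lurie:HTT}, and invertibility of $f$ amounts to this object being terminal. Hence it suffices to prove that every $\infty$-connected object of a locally homotopy dimension $\leq d$ \oo topos is terminal. For this reduction to close, I need to check that $\cE\slice Y$ is itself locally of homotopy dimension $\leq d$: a presentation $Z = \colim_i C_i$ in $\cE$ of the underlying object of any $Z \to Y$, with each $C_i$ of homotopy dimension $\leq d$, lifts to a presentation $(Z \to Y) = \colim_i (C_i \to Y)$ in $\cE\slice Y$, and being of homotopy dimension $\leq d$ is an intrinsic property of an object, independent of the ambient slice.

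The key computation is then as follows. Let $X$ be $\infty$-connected in $\cE$ and let $C$ be any object of homotopy dimension $\leq d$. The projection $C \times X \to C$ is a base change of $X \to 1$ and is therefore itself $\infty$-connected, in particular $(m+d)$-connected for every $m \ge -2$. Its space of sections, which by the universal property of the product is naturally identified with $\Map_\cE(C, X)$, is then $m$-connected for every $m$ by \cref{lem:hdim-le-iff}\eqref{lem:hdim-le-iff:section-m}, hence contractible.

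To conclude, I invoke the generation hypothesis: every $W \in \cE$ admits a presentation $W = \colim_i C_i$ with each $C_i$ of homotopy dimension $\leq d$, and therefore
\[
  \Map_\cE(W, X) \;=\; \lim_i \Map_\cE(C_i, X) \;=\; * .
\]
Thus $X$ is terminal, which finishes the argument.

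I do not anticipate a serious obstacle: the proof reduces to one application of \cref{lem:hdim-le-iff} combined with the colimit-generation hypothesis. The only mildly delicate point is the preliminary verification that the property "locally of homotopy dimension $\leq d$" passes to slices $\cE\slice Y$; without this the reduction to $\infty$-connected \emph{objects} is not available, and one would have to argue about morphisms directly.
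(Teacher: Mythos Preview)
Your proof is correct and rests on the same core observation as the paper's: for $C$ of homotopy dimension $\leq d$, the functor $\Map_\cE(C,-)$ sends $\infty$-connected maps to equivalences, and since such $C$ generate $\cE$ under colimits they jointly detect isomorphisms. The paper packages this more directly via the nerve functor $N : \cE \to \Fun{\cD\op}\cS$ (with $\cD$ the full subcategory of objects of homotopy dimension $\leq d$): it observes that $N$ is conservative by the generation hypothesis and that $N$ sends $(m+d)$-connected maps to $m$-connected maps, hence $\infty$-connected maps to isomorphisms, and concludes. By working with maps throughout, this sidesteps your reduction to $\infty$-connected \emph{objects} and the accompanying check that the hypothesis passes to slices. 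Your slicing argument is fine (the identification $(\cE\slice Y)\slice{(C\to Y)} \simeq \cE\slice C$ shows homotopy dimension is slice-independent, and any colimit presentation $Z = \colim_i C_i$ lifts to $\cE\slice Y$ via the cocone maps $C_i \to Z \to Y$), but it is an avoidable detour: one could equally note that each fiber of $\Map(C,X) \to \Map(C,Y)$ over $g : C \to Y$ is the space of sections of the $\infty$-connected map $C \times_Y X \to C$, hence contractible by \cref{lem:hdim-le-iff}\eqref{lem:hdim-le-iff:section-m}.
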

\begin{proof}
Let $\cD\subseteq\cE$ be the full subcategory of objects of homotopy dimension $\leq d$.
By assumption, the nerve functor $N:\cE\to \Fun{\cD\op}\cS$ is conservative.
(If $f : X \to Y$ in $\cE$ is sent to an equivalence in $\Fun{\cD\op}\cS$,
then the class of objects $A \in \cE$ such that $f$ induces an equivalence $\Map(A,X) \to \Map(A,Y)$
contains $\cD$, and is closed under colimits.)
Since $\cD$ consists of objects of homotopy dimension $\leq d$,
$N$ sends $(m+d)$\=/connected maps to $m$\=/connected maps for all $m$.
In particular, it sends \oo connected maps to \oo connected maps, that is, to isomorphisms.
Since $N$ is conservative, every \oo connected map in $\cE$ is invertible.
\end{proof}

\begin{remark}
\label{rem:lochd-hc}
It is not necessarily the case that an \oo topos with enough objects of homotopy dimension $\leq d$ is hypercomplete. 
A counterexample for $d=0$ is given by the \oo topos of parametrized spectra.
\end{remark}

\begin{remark}
\label{rem:lochd-hc2}
It is not true that the \oo topos envelope of an $n$\=/topos locally of homotopy dimension $\leq d$ is itself locally of homotopy dimension $\leq d$, because this envelope need not be hypercomplete.
A counterexample for $d=0$ is given in \cite[Example A.10]{DHI}, see \cite[Example 3.29]{Mondal-Reinecke:Postnikov}.
\end{remark}

\subsection{Countable choice}

Given an $n$\=/topos $\cE$, we denote by $\NN$ the constant sheaf $\coprod_{n \in \NN} 1 \in \cE$ on the set of natural numbers.
By extensivity of sums in $\cE$, we have a canonical equivalence $\cE\slice \NN = \cE^\NN$, and the right adjoint to $\NN\times-:\cE\to \cE\slice \NN$ coincides with the product functor $\prod:\cE^\NN\to \cE$.

\begin{definition}[Countable choice]
\label{def:CC}
Let $\cE$ be an $n$\=/topos (for $1\leq n\leq \infty$). 
For $-1\leq d\leq \infty$, we say that the axiom of \emph{countable choice of dimension $\leq d$} ($\CC d$) holds in $\cE$
if $\NN$ is of homotopy $\cE$\=/dimension $\leq d$,
that is, if countable products of $(d-1)$\=/connected maps are surjections in $\cE$ (or if any of the equivalent characterizations of \cref{lem:equiv-HD} holds).
\end{definition}

\begin{examples}
\label{ex:CCd}
\begin{exmpenum}
\item\label{ex:CCd:strength} We always have $\CC {-1}\,\Ra\,\CC 0\,\Ra\,\CC 1\,\Ra\,\CC 2\,\Ra\,\cdots\,\Ra\,\CC\infty$.

\item\label{ex:CCd:slice} By \cref{lem:equiv-HD}~\eqref{lem:equiv-HD:4},  if $\CC d$ holds in $\cE$, then it holds in every slice of $\cE$.

\item\label{ex:CCd:d=-1} The axiom $\CC {-1}$ says that a countable product of arbitrary maps is a surjection. This holds if and only if $\cE=1$.

\item\label{ex:CCd:d=0} The axiom $\CC 0$ says that countable products of surjections are surjections. In a 1-topos, this is equivalent to the usual statement of countable choice expressed in the internal language \cite[Proposition~3.42]{Mejak:Choice}.
For $1\le n\le \infty$, $\CC 0$ holds in the $n$\=/topos $\cS\truncated {n-1}$ and in all diagram categories $\Fun C {\cS\truncated {n-1}}$ (in particular, in slices of $\cS\truncated {n-1}$).

\item\label{ex:CCd:d>n} For $d\geq n$, the axiom $\CC d$ holds trivially in every $n$\=/topos $\cE$, since the $(n-1)$\=/connected maps coincide with the isomorphisms.
In a 1-topos, then, the only non-trivial axiom is $\CC 0$.

\item\label{ex:CCd:infty} The axiom $\CC \infty$ says that countable products of \oo connected maps are surjections.
By \cref{lem:decalage} this is equivalent to saying that countable products of \oo connected maps are \oo connected maps. 
The axiom $\CC \infty$ holds for free in any hypercomplete \oo topos.
An example where it fails (and thus any $\CC d$ also) is given in \cref{prop:noCC}.

\item\label{ex:CCd:CCdnoCCd-1} The \oo topos of sheaves on the interval $[0,1]$ does not satisfy $\CC 0$.
For each $k \ge 1$, choose a cover $(U_{k,\alpha})$ of $[0,1]$ by open sets of diameter less than $1/k$,
and set $A_k = \coprod_\alpha U_{k,\alpha}$.
The product $\prod A_k$ is empty, since there is no nonempty open set $U \subseteq [0,1]$
small enough such that all of the objects $A_k$ have sections over $U$.
On the other hand, we will see below that this \oo topos does satisfy $\CC 1$.
More generally, for $0 \le d < \infty$, the \oo topos of sheaves on the $d$\=/cube $[0,1]^d$ satisfies $\CC d$ but not $\CC {d-1}$; see \cref{ex:cube}.

\item\label{ex:CCinfty:noCCd} The \oo topos of sheaves on the space $\coprod_d \, [0,1]^d$ satisfies $\CC \infty$, but does not satisfy $\CC d$ for any $d < \infty$.

\item We will see in \cref{prop:CC-for-hc} that if an \oo topos $\cE$ has $\CC d$, then so does the $n$-topos $\cE\truncated {n-1}$ for any $n$.

\item\label{ex:CCd:nohc}
The \oo topos of $\PSp$ of parametrized spectra satisfies $\CC 0$ without being hypercomplete.
Its hypercompletion is the functor $b:\PSp\to\cS$ extracting the base space.
The axiom $\CC 0$ holds in $\cS$ and it holds in $\PSp$ because $b$ preserves countable products and reflects surjections.

\end{exmpenum}	
\end{examples}

\begin{remark}
Let us explain intutively the geometric meaning of the axiom $\CC d$.
The failure of $\CC 0$ can be understood as follows.
Let $X$ be a topological space, $I$ a set, and $(A_i)$ a family of inhabited sheaves on $X$ (i.e. whose stalks are all nonempty).
A local section of $\prod A_i$ around a point $x$ is a neighborhood $U$ of $x$ and a section of every $A_i$ over $U$.
Every $A_i$ has local sections around $x$, but because the neighborhoods of $x$ are only closed under finite intersections, it might not be possible to find a section of all the $A_i$ over the same neighborhood of $x$.
For this reason, the stalk of $\prod A_i$ can be empty if the set $I$ is infinite, preventing the map $\prod A_i\to 1$ from being a surjection.
Of course, this does not happen if $x$ has enough neighborhoods where surjective sheaves always have sections (i.e. if $X$ is locally of homotopy dimension $\leq 0$ around $x$, like an Alexandrov space, or the Cantor space).

The axiom $\CC 1$ says that when the $A_i$ are 0-connected sheaves (i.e., gerbes), then the product $\prod A_i$ always has local sections around $x$.
However, the space of such sections might not be connected anymore.
This happens if $X$ is locally of covering dimension $\leq 1$ around $x$, for example in $X=[0,1]$.
The key remark is that given any neighborhood $U$ of $x$, we can always find a section of a sheaf $A$ on $U$ when $A$ is 0-connected.
Essentially, one starts with local sections on a cover $U_\alpha$ of $U$, and because $A$ is connected, one can find homotopies between these sections on the $U_{\alpha\beta}$, and use them to construct a global section on $U$.
Because $X$ is of covering dimension $\leq 1$, there are no triple intersections to consider.
(If $X$ had been of covering dimension $\leq 2$, we would have needed coherence homotopies on triple intersections. Such homotopies always exist if $A$ is a 1-connected sheaf.)
Given a family of connected sheaves $A_k$, we see that we can construct a section of each of $A_k$ on the same neighborhood of $x$.
Thus the stalk of $\prod A_i$ at $x$ is not empty.
\end{remark}

\begin{remark}
We now discuss the logical meaning of the axiom $\CC d$.
Recall first that, in ordinary logic, the class of surjections is used to provide semantics for \emph{existential conditions}.
Using the notation of \cite{hottbook}, a proposition $\exists x.P(x)$ is interpreted as the image of the map $p:\Sigma_{x:X} P(x)\to 1$, that is as the maximal subterminal object over which the map $p$ is a surjection.
The class of isomorphisms is similarly used for \emph{unique existential conditions}.
The proposition $\exists! x.P(x)$ is interpreted as the condition that the map $p:\Sigma_{x:X} P(x)\to 1$ is an isomorphism,
i.e., the maximal subterminal object over which the map $p$ is an isomorphism.
 
Between these two conditions, the classes of $n$-connected maps provide semantics for an (external) hierarchy of intermediate ``uniqueness levels'' for existential conditions.
Let us introduce the notation $\exists^n x.P(x)$ to mean that $\Sigma_{x:X} P(x)$ is an $n$-connected object.
Informally, for $n \ge 0$,
$\exists^n x.P(x)$ means that there exists $x$ satisfying $P(x)$ and for any two such $x$, there exists a path (or equality) between them in the sense of $\exists^{n-1}$.
Semantically, $\exists^n x.P(x)$ corresponds to the maximal subterminal object over which the map $p$ is $n$-connected.
When $n=-1$, this recovers the quantifier $\exists$.
When $n=\infty$, this does \emph{not} recover the quantifier $\exists!$, since the semantics of $\exists^\infty x.P(x)$ is that the object $\Sigma_{x:X} P(x)$ is \oo connected rather than contractible.
\[
\exists!
\ \Ra\ 
\exists^\infty
\ \Ra\ 
\dots
\ \Ra\ 
\exists^0
\ \Ra\ 
\exists^{-1}\,=\,\exists\,.
\]

With these notions in mind, any operation sending $(m+d)$-connected maps to $m$-connected maps (like $\prod_\NN$ in $\CC d$) can be thought of as an operation ``weakening the uniqueness level'' of existential conditions.

\end{remark}

\begin{remark}
\label{rem:int-dim-N}
Using the terminology of \cref{rmk:internal-hdim},
we can say that $\CC d$ holds in $\cE$
if and only if the object $\NN \in \cE$
is internally of homotopy dimension $\le d$.
This condition can also be expressed in the language of homotopy type theory, for instance as 
\[
\CC{}^\mathsf{int}:\NN \to \mathsf{Prop}\,,
\quad\qquad
\CC d^\mathsf{int} :=
\prod_{X : \NN \to \cU}
\big(\prod_{n : \NN} \isConn_{d-1}(X(n)) \big)
\to
\isConn_{-1}
\big( \prod_{n : \NN} X(n) \big)
\,.
\]
Here $\isConn_m(A)$ is the proposition that a type $A$ is $m$-connected,
i.e., that the $m$-truncation $\lVert A \rVert_m$ is contractible.
We set $\isConn_\infty(A) := \prod_{m : \NN} \isConn_m(A)$.
The statement $\CC d^\mathsf{int}$ is the appropriate formulation of $\CC d$ in an elementary \oo topos \cite{nlab:elementary,Rasekh:elementary}.
(In this context, the external statement of \cref{def:CC} may not even make sense since elementary topoi may not have external countable products when their natural number object is not standard.
For the same reason, the internal definition of $\isConn_\infty$ may be stronger than its external counterpart.)
\end{remark}

\begin{remark}
Throughout this section we could replace $\NN$ by an arbitrary infinite set $I$,
obtaining an axiom of ``$I$\=/indexed choice of dimension $\le d$'',
whose strength generally increases with the cardinality of $I$.
We could also impose this axiom for every set $I$,
meaning that arbitrary products of $(d-1)$\=/connected maps in $\cE$ are surjections.
Note that, even for a 1-topos $\cE$ and for $d = 0$,
this last axiom is weaker than the internal axiom of choice in $\cE$,
since it only concerns families of maps of $\cE$ indexed by an external set
(equivalently, internal families in $\cE$ indexed on a constant sheaf).

\cite{Roos:Rlim} introduced the axiom AB4*\=/$d$ for a Grothendieck abelian category $\cA$,
which states that for every set $I$,
the derived functors of the $I$\=/ary product functor $\prod_I : \cA^I \to \cA$
vanish in degrees higher than $d$.
When $d = 0$, this axiom reduces to Grothendieck's axiom AB4* (products are exact).
The axiom $\CC d$ is a non-abelian analogue of AB4*\=/$d$,
except that we consider only countable products.
\end{remark}

The following result will be a consequence of \cref{prop:enough-DCd} proved in the next section.
\begin{proposition}
\label{prop:bhd}
Any $n$\=/topos with enough objects of homotopy dimension $\leq d$ satisfies $\CC d$.
\end{proposition}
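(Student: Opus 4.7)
The plan is to verify the characterization of $\CC d$ given by \cref{lem:equiv-HD}\eqref{lem:equiv-HD:1:obj}: for every object $E \in \cE$ and every $\NN$\=/indexed family $(A_k \to E)_{k \in \NN}$ of $(d-1)$\=/connected objects of $\cE\slice E$, the product $\prod^E_k A_k$ in $\cE\slice E$ is inhabited. Concretely, this means the structural map $\prod^E_k A_k \to E$ must be a surjection in $\cE$.

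Fix such an $E$ and family. Using the hypothesis that $\cE$ has enough objects of homotopy dimension $\leq d$, I would choose a covering family $(P_j \to E)_j$ with each $P_j$ of homotopy dimension $\leq d$. Surjections in $\cE$ are closed under right cancellation, so it suffices to lift each map $P_j \to E$ through $\prod^E_k A_k \to E$: once every $P_j$ lifts, the covering map $\coprod_j P_j \to E$ factors through $\prod^E_k A_k \to E$, and since the former is a surjection the latter is too. By the universal property of the product in $\cE\slice E$, such a lift $P_j \to \prod^E_k A_k$ over $E$ is the same data as a family of lifts $s_k : P_j \to A_k$ over $E$, one for each $k \in \NN$.

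For each fixed $k$, the individual lift $s_k$ exists by \cref{lem:hdim-le-iff}\eqref{lem:hdim-le-iff:lift}, since $P_j$ is of homotopy dimension $\leq d$ and $A_k \to E$ is $(d-1)$\=/connected; equivalently, the mapping space $\Map_{\cE\slice E}(P_j, A_k)$ is non-empty in $\cS$. To assemble the $s_k$ into a single element of $\Map_{\cE\slice E}(P_j, \prod^E_k A_k) = \prod_k \Map_{\cE\slice E}(P_j, A_k)$, I would invoke $\CC 0$ in $\cS$ (\cref{ex:CCd:d=0}), which guarantees that a countable product of non-empty spaces is non-empty. The main subtlety is precisely this need to make a countable \emph{simultaneous} choice of lifts rather than only pointwise ones, handled cleanly by reducing to $\CC 0$ in the base topos $\cS$; the remainder is a direct translation through the slice, cover, and mapping-space reformulations provided by \cref{lem:equiv-HD} and \cref{lem:hdim-le-iff}.
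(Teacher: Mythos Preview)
Your proof is correct. The paper, however, does not argue directly: it defers the proof to \cref{prop:enough-DCd}, which establishes the stronger axiom $\DC d$ under the same hypothesis, and then invokes $\DC d \Rightarrow \CC d$ (\cref{lem:DC-CC}). The core technique in both arguments is the same---cover the target by objects $P_j$ of homotopy dimension $\leq d$, use \cref{lem:hdim-le-iff} to lift, and conclude that the map in question is a surjection by right cancellation against $\coprod_j P_j \to E$---but the paper lifts inductively up a tower (yielding $\DC d$), whereas you lift into each factor of the product independently and appeal to countable choice in $\cS$ to collect the lifts. Your route is more direct for the bare statement of $\CC d$ and avoids the detour through towers; the paper's route has the advantage of delivering the stronger $\DC d$ along the way, which is used elsewhere (e.g.\ \cref{ex:DC-cube}). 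Your explicit invocation of ``$\CC 0$ in $\cS$'' is harmless but perhaps overly formal: it is just countable choice in the ambient metatheory, which the paper (and \cref{prop:enough-DCd}) takes for granted.
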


\begin{example}[The $d$\=/cube]
\label{ex:cube}
Let $\cE$ denote the \oo topos of sheaves on the $d$\=/cube $X = [0,1]^d$.
By classical dimension theory, every subset of $\RR^d$ has covering dimension $\le d$
\cite[Chapter 1]{Engelking:Dimension}.
In particular, every open subset $U \subseteq X$ has covering dimension $\le d$
so by \cite[Theorem 7.2.3.6]{Lurie:HTT}, $\cE\slice U$ has homotopy dimension $\le d$.
Then $\cE$ satisfies $\CC d$ by \cref{prop:bhd}.

We show that $\cE$ does not have $\CC {d-1}$.
We already checked this for $d = 1$ in \cref{ex:CCd:CCdnoCCd-1}, so assume $d \ge 2$.
We take $A \in \cE$ to be the Eilenberg--Mac\ Lane object $K(F, d)$
for a sheaf of abelian groups $F$ on $X$ constructed below
in such a way that $\pi_0(A^\NN)$ is not the terminal object of $\cE$.
Then $A$ will be $(d-1)$\=/connected, but $A^\NN$ will not be $0$\=/connected,
so $\cE$ does not satisfy $\CC {d-1}$.

We can compute the stalk of the sheaf $\pi_0(A^\NN)$ at a point $x \in X$ as
\begin{align*}
  \pi_0(A^\NN)_x
  & = \pi_0((A^\NN)_x)
  = \pi_0(\colim_{U \ni x} A^\NN(U))
  = \colim_{U \ni x} \pi_0(\prod_\NN A(U))
  = \colim_{U \ni x} \prod_\NN \pi_0(A(U)) \\
  & = \colim_{U \ni x} \prod_\NN H^d(U, F|_U).
\end{align*}
(Compare \cite[Proposition 1.6]{Roos:Rlim}.)
Here, $H^d$ denotes ordinary sheaf cohomology.
The first step used \cite[Remark 6.5.1.4]{Lurie:HTT},
and the last step used \cite[Remark 7.2.2.17]{Lurie:HTT}.
In this colimit, we may restrict attention to
the cofinal family of contractible open neighborhoods $U$ of $x$,
as $X$ is locally contractible.

Let $x = (\frac 12, \ldots, \frac 12)$ be the center of $X$
and let $Z \subseteq X$ be the (closed) union of $\{x\}$ and the spheres $S_k$ of radius $1/k$ centered at $x$
for all $k \ge 3$.
Let $j : X \setminus Z \to X$ denote the inclusion of the open complement of $Z$,
and set $F = j_! \ZZ$.
For $U$ a contractible open neighborhood of $x$,
the localization sequence
$0 \to (j_! j^* \ZZ)|_U \to \ZZ|_U \to (i_* i^* \ZZ)|_U \to 0$
yields an isomorphism $H^d(U, F|_U) \cong H^{d-1}(U \cap Z, \ZZ)$,
as $H^{d-1}(U, \ZZ) = H^d(U, \ZZ) = 0$.
Moreover, these isomorphisms are compatible with restriction to a smaller $U' \subseteq U$.

The space $Z$ has $S_k \cup \{x\}$ as a retract for each $k$,
so there is an element $\alpha \in \prod_{\NN} H^{d-1}(Z, \ZZ)$
such that, for each $k \ge 3$,
$\alpha_k$ restricts to a generator of $H^{d-1}(S_k, \ZZ)$.
Any contractible open neighborhood $U$ of $x$ contains $S_k$ for some $k \ge 3$,
and then the restriction of $\alpha$ to $\prod_\NN H^{d-1}(U \cap Z, \ZZ)$
is nonzero in the $k$th component.
Hence, $\alpha$ represents a nontrivial element of
$\colim_{U \ni x} \prod_\NN H^d(U, F|_U) = \pi_0(A^\NN)_x$.
\end{example}

\begin{proposition}
\label{prop:CC-for-hc}
If $\CC d$ holds in $\cE$, then it also holds in every $(n+1)$-topos $\cE\truncated n$ for $0\leq n\leq \infty$ (in particular, in the hypercompletion $\cE\truncated\infty$).
\end{proposition}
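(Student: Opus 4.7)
The strategy is to compare $\cE$ and its localization $\cE\truncated n$ via the right-adjoint inclusion $i : \cE\truncated n \hookrightarrow \cE$ (with left adjoint the truncation $\tau_n$ for $n<\infty$, and hypercompletion $P_\infty$ for $n=\infty$). The key point will be that, for morphisms between objects of $\cE\truncated n$, both surjectivity and $(d-1)$\=/connectedness agree with their counterparts in $\cE$. Once this is verified, the proposition follows immediately: a countable family $(f_k:A_k\to B_k)$ of $(d-1)$\=/connected maps in $\cE\truncated n$ is also such a family in $\cE$; the product in $\cE\truncated n$ agrees with the product in $\cE$ (since $i$ preserves limits); the product is a surjection in $\cE$ by $\CC d$; hence a surjection in $\cE\truncated n$.

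The comparison of surjections is the heart of the argument. Given $f:A\to B$ in $\cE\truncated n$, form its surjection-mono factorization $A\surj C\hookrightarrow B$ in $\cE$. Then $C$ is a subobject of $B$, and subobjects of $n$\=/truncated (resp.\ \oo truncated) objects are $n$\=/truncated (resp.\ \oo truncated): a monomorphism is itself $(-1)$\=/truncated, hence $n$\=/truncated (resp.\ \oo truncated) as a map, and composites of such maps are such, so $C\to B\to 1$ shows that $C$ inherits the truncation level from $B$. This factorization therefore lives in $\cE\truncated n$, so surjectivity in $\cE\truncated n$ matches surjectivity in $\cE$. Since $i$ preserves finite limits, the iterated diagonals $\Delta^\ell(f)$ of a map $f$ in $\cE\truncated n$ are the same whether computed in $\cE\truncated n$ or in $\cE$, and so $(d-1)$\=/connectedness also matches.

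I expect no serious obstacle. The $n<\infty$ case is essentially routine, and the $n=\infty$ case is handled by the same arguments together with the observation that the class of \oo truncated objects is closed under arbitrary limits in $\cE$ (in particular under countable products and under subobject inclusions), which follows from its definition as a right-orthogonal class. If anything is delicate, it is simply keeping the two parallel strands of reasoning (for $n<\infty$ and $n=\infty$) uniformly phrased in terms of the right adjoint inclusion $i$ and the closure properties of $n$\=/ and \oo truncated objects.
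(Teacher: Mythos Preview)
Your argument is correct and more direct than the paper's. The paper establishes the stronger \cref{lem:CC-for-trunc}—that the reflector $P_n : \cE \to \cE\truncated n$ preserves and reflects $k$\=/connected maps (for $k+1 \le n$) on \emph{all} maps of $\cE$, not just those between objects of $\cE\truncated n$—which in turn rests on the cancellation results \cref{lem:cancel-conn,lem:Pn-equiv}. You bypass this machinery by working entirely with the inclusion $i$: since the $\cE$\=/image of a map in $\cE\truncated n$ lands again in $\cE\truncated n$, the (surjection, mono) factorizations agree, and so surjectivity—and hence $k$\=/connectedness, via iterated diagonals—agrees for such maps. Your route is more economical for the proposition at hand; the paper's route yields \cref{lem:CC-for-trunc} as a byproduct of independent interest. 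One step you might make explicit: to pass from ``the factorization $A \surj C \hookrightarrow B$ lives in $\cE\truncated n$'' to ``surjectivity matches'', you need that $A \to C$ is also a surjection \emph{in} $\cE\truncated n$. This follows either because the reflector (an algebraic morphism) preserves surjections and fixes $C$, or because any subobject of $C$ in $\cE\truncated n$ through which $A$ factors is already a subobject in $\cE$, hence all of $C$.
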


The proof of \cref{prop:CC-for-hc} will need a few lemmas about connected maps.
In this proof, we write $P_k = (-)\truncated k: \cE \to \cE\truncated k$
for the left adjoint to the inclusion $\cE\truncated k \subseteq \cE$
for any $0 \le k \le \infty$;
$P_\infty : \cE \to \cE\truncated \infty$
is the hypercompletion functor.

\begin{lemma}
\label{lem:cancel-conn}
If $f:X\to Y$ and $g:Y\to Z$ are two maps such that $g$ is $(k+1)$-connected and $gf$ is $k$-connected, then $f$ is $k$-connected.
\end{lemma}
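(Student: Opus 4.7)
The plan is to induct on $k\ge -2$, using the characterization that $f$ is $m$\=/connected if and only if $f$ is surjective and $\Delta f$ is $(m-1)$\=/connected (for $m \ge -1$); the base case $k=-2$ is vacuous since every map is $(-2)$\=/connected.

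The key geometric input for the inductive step is the pullback square
\[
\begin{tikzcd}
X \times_Y X \ar[r] \ar[d, "\gamma"'] & Y \ar[d, "\Delta g"] \\
X \times_Z X \ar[r] & Y \times_Z Y
\end{tikzcd}
\]
whose bottom arrow is induced by $f$ on each factor. A direct limit computation (elements of the claimed pullback are pairs $(y,(x_1,x_2))$ with $y=f(x_1)=f(x_2)$, which amounts to the data of $X\times_Y X$) verifies that this is indeed cartesian. Consequently, the canonical comparison $\gamma:X\times_Y X\to X\times_Z X$ is a base change of $\Delta g$. Since $g$ is $(k+1)$\=/connected, $\Delta g$ is $k$\=/connected, and hence so is $\gamma$ by stability of connectivity under pullback. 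Combined with the factorization $\gamma\circ\Delta f=\Delta(gf)$ and the fact that $\Delta(gf)$ is $(k-1)$\=/connected (because $gf$ is $k$\=/connected), applying the induction hypothesis at level $k-1$ to the pair $(\gamma,\Delta f)$ shows that $\Delta f$ is $(k-1)$\=/connected.

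It remains (when $k\ge -1$) to check that $f$ itself is surjective. A parallel pullback argument handles this: form $X\times_Z Y$, the pullback of $gf$ along $g$. Its projection to $Y$ is a base change of the surjection $gf$ and hence surjective, while the section $(\mathrm{id}_X,f):X\to X\times_Z Y$ is a base change of $\Delta g$ (which is surjective since $g$ is at least $0$\=/connected) along the evident map $X\times_Z Y\to Y\times_Z Y$, so it is surjective as well. Therefore $f$ decomposes as a composite of surjections, and combined with the connectivity of $\Delta f$ established above, $f$ is $k$\=/connected.

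The main technical step is the pullback identification of $\gamma$ as a base change of $\Delta g$; once that is in hand, the cancellation problem at level $k$ on $f$ reduces cleanly to the same problem at level $k-1$ on $\Delta f$, making the induction go through.
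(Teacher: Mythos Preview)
Your argument is correct. Both pullback squares check out, and the induction hypothesis is applied with the right connectivity levels: $\gamma$ is $k$\=/connected and $\gamma\circ\Delta f=\Delta(gf)$ is $(k-1)$\=/connected, so $\Delta f$ is $(k-1)$\=/connected; similarly your surjectivity argument for $f$ is valid since $\Delta g$ is at least $(-1)$\=/connected whenever $k\ge -1$.

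Your route is genuinely different from the paper's. The paper gives a one-shot, non-inductive proof: it forms the same pullback $Y'=X\times_Z Y$, observes that $Y'\to Y$ is $k$\=/connected (base change of $gf$) and that the section $X\to Y'$ is highly connected by the cited fact (\cite[Proposition 6.5.1.20]{Lurie:HTT}, \cite[Lemma 7.5.11]{hottbook}) that a section of an $m$\=/connected map is $(m+1)$\=/connected, then concludes by composition. Your proof trades that external lemma for an induction on $k$ via diagonals; the price is two separate sub-arguments (surjectivity of $f$, connectivity of $\Delta f$), but the gain is self-containment---you only use stability of connectivity under base change and composition, which are already established in the paper's conventions. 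Interestingly, both proofs pivot on the same map $X\to X\times_Z Y$: the paper gets its connectivity from the ``section of a connected map'' lemma, while you get it (only the surjectivity you need) as a base change of $\Delta g$.
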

\begin{proof}
We consider the pullback of $g$ along $gf$
\[
\begin{tikzcd}
X\ar[r] \ar[rd,equal] \ar[rr,bend left, "f"]& Y'\ar[r] \ar[d] \pbmark & Y \ar[d,"g"] \\
& X \ar[r,"gf"] & Z\,.
\end{tikzcd}
\]
The map $Y'\to Y$ is $k$-connected since it is a base change of $gf$.
The map $Y'\to X$ is $(k+1)$-connected since it is a base change of $g$.
The map $X\to Y'$ is $(k+1)$-connected since it is a section of a $k$-connected map (see \cite[Proposition 6.5.1.20]{Lurie:HTT} or \cite[Lemma 7.5.11]{hottbook}).
The map $f$ is the composition of two $k$-connected maps, thus $k$-connected.
\end{proof}

\begin{lemma}
\label{lem:Pn-equiv}
If a map $X\to Y$ is sent to an isomorphism by
%the $(k+1)$-truncation functor 
$P_{k+1}:\cE\to \cE\truncated {k+1}$, then it is $k$-connected.
\end{lemma}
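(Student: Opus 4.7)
The plan is to apply \cref{lem:cancel-conn} to the naturality square for the unit of the $P_{k+1}$\=/localization. The only ingredient beyond the previous lemma is the standard fact that, for every $0 \le k+1 \le \infty$, the unit map $\eta_Z : Z \to P_{k+1}(Z)$ is itself $(k+1)$\=/connected.

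For $k+1 < \infty$, this is immediate from the definition: $P_{k+1}(Z)$ arises as the $(k+1)$\=/truncated factor in the $(k+1)$\=/connected/$(k+1)$\=/truncated factorization of $Z \to 1$, so $\eta_Z$ is $(k+1)$\=/connected. For $k+1 = \infty$, I would factor the unit $\eta_Z : Z \to P_\infty(Z)$ as $Z \xrightarrow{\alpha} W \xrightarrow{\beta} P_\infty(Z)$ with $\alpha$ an \oo connected map and $\beta$ an \oo truncated map. Since \oo truncated maps are closed under composition and $P_\infty(Z)$ is hypercomplete, $W$ is hypercomplete as well; the universal property of the hypercompletion then forces $\beta$ to be invertible, so $\eta_Z = \alpha$ is \oo connected.

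With this in hand, I would form the naturality square
\[
\begin{tikzcd}
X \ar[r, "f"] \ar[d, "\eta_X"'] & Y \ar[d, "\eta_Y"] \\
P_{k+1}(X) \ar[r, "P_{k+1}(f)"] & P_{k+1}(Y),
\end{tikzcd}
\]
in which $P_{k+1}(f)$ is invertible by hypothesis. The common composition $\eta_Y \circ f = P_{k+1}(f) \circ \eta_X$ is then a composition of two $(k+1)$\=/connected maps, and is therefore itself $(k+1)$\=/connected, hence in particular $k$\=/connected. Applying \cref{lem:cancel-conn} with $g = \eta_Y$ (which is $(k+1)$\=/connected) to the composition $g \circ f$ (which is $k$\=/connected) yields that $f$ is $k$\=/connected, as desired.

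The only genuinely subtle point in this argument is the verification that the hypercompletion unit is \oo connected; everything else is diagram chasing plus a direct appeal to the previous lemma.
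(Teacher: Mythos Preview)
Your argument is correct and is essentially the same as the paper's: both apply \cref{lem:cancel-conn} to $f:X\to Y$ and $g=\eta_Y:Y\to P_{k+1}Y$, using that $gf=\eta_Y\circ f=P_{k+1}(f)\circ\eta_X$ is $(k+1)$\=/connected (hence $k$\=/connected) because the units are. The paper's one-line proof simply collapses the naturality square to the triangle $X\to Y\to P_{k+1}X=P_{k+1}Y$, and takes the $(k+1)$\=/connectedness of $\eta_Z$ (including the case $k+1=\infty$) as already established by the $(m\text{-connected},m\text{-truncated})$ factorization system recorded in the Conventions, so your separate verification of this fact is correct but not needed here.
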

\begin{proof}
Apply \cref{lem:cancel-conn} to the commutative triangle $X\to Y \to P_{k+1}X=P_{k+1}Y$.
\end{proof}

\begin{lemma}
\label{lem:CC-for-trunc}
For $k+1 \le n \le \infty$, a map $X\to Y$ in $\cE$ is $k$-connected if and only if the map $P_nX\to P_nY$ is $k$-connected in $\cE\truncated n$.
(In other words, the functor $P_n:\cE\to \cE\truncated n$ preserves and reflects $k$\=/connected maps.
Note that we may have $k = n = \infty$.)
\end{lemma}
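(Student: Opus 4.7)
I would prove the two directions separately. The forward direction is a general fact about left-exact cocontinuous functors, while the reverse direction uses the high connectivity of the unit of $P_n$ together with the cancellation Lemma~\ref{lem:cancel-conn}.

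For the forward direction, $P_n$ is a left-exact localization: for $n<\infty$ this is the $n$-truncation reflector, and for $n=\infty$ it is the hypercompletion reflector. As a left-exact functor it preserves finite limits, and in particular preserves the iterated diagonals $\Delta^\ell$. As a left adjoint it is cocontinuous, so it preserves Čech-nerve colimits and hence effective epimorphisms. Combining these two facts, $P_n$ preserves $k$-connected maps for every $k$.

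For the reverse direction, I would use that the unit $\eta_X : X \to P_n X$ is $n$-connected at every object $X$: for $n<\infty$ this is the standard fact that the $n$-truncation unit is the left factor in the $n$-connected/$n$-truncated factorization of $X\to X\truncated n$, while for $n=\infty$ the hypercompletion units are $\infty$-connected by construction of the localization at $\infty$-connected maps. Since $k+1\leq n$, each $\eta_X$ is in particular both $k$-connected and $(k+1)$-connected. Now suppose $P_n f$ is $k$-connected in $\cE\truncated n$; viewed as a map of $\cE$, it remains $k$-connected (see the obstacle below). The naturality square then gives $\eta_Y \circ f = P_n f \circ \eta_X$, and the right-hand side is $k$-connected as a composition of two $k$-connected maps. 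Applying \cref{lem:cancel-conn} with $g = \eta_Y$ in the role of the $(k+1)$-connected factor yields that $f$ itself is $k$-connected.

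The main technical point I anticipate is the transfer of ``$k$-connected in $\cE\truncated n$'' to ``$k$-connected in $\cE$'' for a map between $n$-truncated objects. This reduces to showing that the inclusion $\cE\truncated n \hookrightarrow \cE$ preserves surjections between $n$-truncated objects (it automatically preserves iterated diagonals, being a right adjoint). This follows from uniqueness of image factorizations: the image in $\cE$ of a map between $n$-truncated objects is $n$-truncated as a subobject of an $n$-truncated object, so the $(-1)$-connected/$(-1)$-truncated factorization in $\cE$ lives in $\cE\truncated n$ and must agree with the one computed there.
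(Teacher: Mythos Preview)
Your argument is correct and follows the same three-step skeleton as the paper: prove the forward direction from left-exactness and cocontinuity of $P_n$, show that the inclusion $\cE\truncated n \hookrightarrow \cE$ preserves $k$-connected maps, and finish via the naturality square and \cref{lem:cancel-conn}. The paper phrases the forward direction dually (the right adjoint preserves $k$-truncated maps, hence the left adjoint preserves $k$-connected maps), but this is the same content.

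The only genuine variation is in the transfer step. The paper works directly with the $(k\text{-connected},k\text{-truncated})$ factorization of $P_nX\to P_nY$ in $\cE$, checks that the middle object lands in $\cE\truncated n$, and uses cancellation inside $\cE\truncated n$ to collapse it. You instead reduce to $k=-1$ by passing to iterated diagonals and then compare image factorizations; this is a bit more elementary. Your sentence ``must agree with the one computed there'' deserves one more clause of justification: the mono half is preserved because the fully faithful limit-preserving inclusion reflects monos, and the surjection half is preserved either by invoking the forward direction (apply $P_n$, which fixes the map and sends surjections to surjections), or by noting that subobjects of an $n$-truncated object are the same in $\cE$ and in $\cE\truncated n$, so a map that is a surjection in $\cE\truncated n$ cannot factor through a proper subobject in $\cE$.
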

\begin{proof}
The factorization system ($k$-connected, $k$-truncated) is defined both in $\cE$ and in $\cE\truncated n$.
The inclusion $\cE\truncated n\subseteq \cE$ preserves diagonals and isomorphisms, thus $k$-truncated maps.
(Since it is moreover a conservative functor, it reflects $k$-truncated maps; we will use this below.)
Consequently, its left adjoint $P_n:\cE\to \cE\truncated n$ preserves $k$-connected maps for every $k$.

Conversely, we assume that the map $P_nX\to P_nY$ is $k$-connected in $\cE\truncated n$.
Let us show that it is also $k$-connected when viewed as a map in $\cE$.
We consider its ($k$-connected, $k$-truncated) factorization $P_nX\to Z\to P_nY$ in $\cE$.
The map $Z\to P_nY$ is $k$-truncated in $\cE$, thus $n$-truncated since $k<n$.
This shows that $Z$ is an object of $\cE\truncated n$.
Since the inclusion $\cE\truncated n\subseteq \cE$ reflects $k$-truncated maps, the map $Z\to P_nY$ is also $k$-truncated in $\cE\truncated n$.
Since $Z$ is in $\cE\truncated n$, the image of $P_nX\to Z$ by $P_n$ is itself.
Since it is $k$-connected in $\cE$, it is $k$-connected in $\cE\truncated n$ by the argument above.
Since $P_nX\to Z$ and $P_nX\to P_nY$ are $k$-connected in $\cE\truncated n$, so is  $Z\to P_nY$ by cancellation.
This shows that $Z\to P_nY$ is both $k$-connected and $k$-truncated in $\cE\truncated n$, thus an isomorphism.
Hence we have shown the map $P_nX\to P_nY$ is $k$-connected in $\cE$.

We now consider the canonical square in $\cE$
\[
\begin{tikzcd}
X \ar[r] \ar[d]& P_nX \ar[d]\\
Y \ar[r] & P_nY\,.
\end{tikzcd}
\]
The maps $Y\to P_nY$ is $n$-connected, thus $(k+1)$-connected.
The maps $X\to P_nX$ is $n$-connected, thus $k$-connected.
The map $P_nX\to P_nY$ is $k$-connected, so the composition $X\to P_nX\to P_nX$ is $k$-connected.
Then the map $X\to Y$ is $k$-connected by \cref{lem:cancel-conn}.
\end{proof}

\begin{remark}
For $n < \infty$, since not every map between $n$-connected objects is an $n$-connected map,
the functor $P_n:\cE\to \cE\truncated n$ inverts some maps which are not $n$-connected.
Since the $n$-connected maps in $\cE\truncated n$ are exactly the isomorphisms, this shows that \cref{lem:CC-for-trunc} is false when $k+1 > n$.
\end{remark}

\begin{remark}
For $n=0$, \cref{lem:CC-for-trunc} recovers that $X\to Y$ is a surjection if and only if the map $P_0X\to P_0Y$ is a surjection.
For $n=\infty$, the result is \cite[Proposition A.4.2.1 and Example A.4.2.4]{Lurie:SAG} or \cite[Proposition 4.3.2]{ABFJ:GT}.
\end{remark}

\begin{proof}[Proof of \cref{prop:CC-for-hc}]
If $n<\infty$, we have seen that $\CC d$ holds in the $(n+1)$-topos $\cE\truncated n$ for $d\geq n+1$.
So, we may assume that $d\le n$.

Let $(X_k\to Y_k)$ be a countable family of $(d-1)$\=/connected maps in $\cE\truncated n$,
and let $p : X\to Y$ be their product in $\cE\truncated n$.
We denote by $\iota_n$ the inclusion $\cE\truncated n \subseteq \cE$.
The map $\iota_n p : \iota_nX\to \iota_nY$ is the product of the maps $\iota_nX_k\to \iota_nY_k$ in $\cE$, since $\iota_n$ preserves products.
Using $P_n\iota_n=id$ and the reflection of $(d-1)$-connected maps of $P_n: \cE\to\cE\truncated n$ (\cref{lem:CC-for-trunc}), we deduce that each map $\iota_nX_k\to \iota_nY_k$ is $(d-1)$\=/connected in $\cE$.
By assumption on $\cE$, the map $\iota_n p:\iota_nX\to \iota_nY$ is surjective in $\cE$.
Thus the map $P_n\iota_n p = p$ is surjective in $\cE\truncated n$.
\end{proof}

A counterexample to the converse of \cref{prop:CC-for-hc} for $n=\infty$ will be given in \cref{rem:app-hc}.

\subsection{Dependent choice}

In this section, we introduce the stronger axiom of dependent choice.
We denote by $\Tower$ the poset $\dots\to 2\to 1\to 0$.
A diagram $X\index : \Tower\to \cE$ is called a \emph{tower}.
The \emph{composition} of the tower $X\index$ is the map $\lim X\index \to X_0$.

\begin{definition}[Dependent choice]
\label{def:DC}
Let $\cE$ be an $n$\=/topos (for $1\leq n\leq \infty$). 
For $-1\leq d\leq \infty$, we say that the axiom of \emph{dependent choice of dimension $\leq d$} ($\DC d$) holds in $\cE$ if the composition of a tower of $(d-1)$\=/connected maps is a surjection.
\end{definition}

The condition $\DC 0$ is called ``repleteness'' in \cite{Bhatt-Scholze:proetale,Mondal-Reinecke:Postnikov}.
An argument similar to that of \cref{lem:decalage} would show that $\DC d$ holds if and only if the limit of every tower of $(m+d)$\=/connected maps is an $m$\=/connected map for all $m \ge -2$.
By reindexing the tower, the axiom $\DC d$ holds if and only if, for every tower $X\index$ of $(d-1)$\=/connected maps, all the maps $\lim X\index \to X_k$ are surjections.

\begin{lemma}
\label{lem:DC-CC}
The axiom $\DC d$ implies $\CC d$.
\end{lemma}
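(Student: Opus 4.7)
The plan is to express a countable product as the limit of a tower of partial products, so that $\DC d$ supplies exactly the surjectivity required by $\CC d$. Given a countable family $(f_k : A_k \to B_k)_{k \in \NN}$ of $(d-1)$\=/connected maps in $\cE$, set $B := \prod_k B_k$ and, for each $k$, pull back $f_k$ along the projection $\pi_k : B \to B_k$ to obtain $A'_k := B \times_{B_k} A_k$. The structural map $A'_k \to B$ is $(d-1)$\=/connected, being a base change of $f_k$.

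Now build a tower $X\index$ over $B$ by setting $X_0 := B$ and $X_{n+1} := X_n \times_B A'_n$, with transition map given by the projection. Each map $X_{n+1} \to X_n$ is a base change of $A'_n \to B$, hence $(d-1)$\=/connected (and its connectivity in $\cE\slice B$ agrees with its connectivity in $\cE$ by \cite[Proposition 6.5.1.19]{Lurie:HTT}). By $\DC d$, the composition $\lim X\index \to X_0 = B$ is a surjection in $\cE$.

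It then remains to identify this composition with the product map $\prod_k f_k$. Unwinding the definitions, $X_n = A'_0 \times_B \cdots \times_B A'_{n-1}$, so $\lim X\index$ is the infinite $B$\=/fiber product $\prod_k^B A'_k$. Since a point of $A'_k$ over $b \in B$ is an element $a_k \in A_k$ with $f_k(a_k) = \pi_k(b)$, this infinite $B$\=/fiber product is canonically equivalent to $\prod_k A_k$, with structural map to $B$ given by $\prod_k f_k$. Surjectivity of this map is exactly $\CC d$.

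The only real obstacle is the bookkeeping in the last identification, but it is just the universal property of the product combined with the compatibility of the $\pi_k$ with the maps $f_k$, so nothing is subtle.
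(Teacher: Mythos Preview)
The proposal is correct and follows essentially the same approach as the paper: both express the product map $\prod_k A_k \to \prod_k B_k$ as the composition of a tower over $B = \prod_k B_k$ whose transition maps are base changes of the individual $f_k$, then apply $\DC d$. Your tower $X_n = A'_0 \times_B \cdots \times_B A'_{n-1}$ is exactly the paper's $(A_0 \times \cdots \times A_{n-1}) \times_{B_0 \times \cdots \times B_{n-1}} B$, and you simply spell out the final identification $\lim X\index \cong \prod_k A_k$ in more detail than the paper does.
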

\begin{proof}
For a countable family of $(d-1)$\=/connected maps $A_k\to B_k$, 
their product $A \to B$ is the limit of a tower of $(d-1)$\=/connected maps in $\cE\slice B$
\[
\dots
\to 
(A_0 \times A_1 \times A_2) \times_{B_0 \times B_1 \times B_2} B
\to 
(A_0\times A_1)\times_{B_0\times B_1}B
\to 
A_0\times_{B_0}B
\to B\,.\qedhere
\]
\end{proof}

\begin{examples}
\label{ex:DCd}
\begin{exmpenum}

\item\label{ex:DCd:strength} We always have $\DC {-1}\,\Ra\,\DC 0\,\Ra\,\DC 1\,\Ra\,\cdots\,\Ra\,\DC\infty$.

\item\label{ex:DC-1} The axiom $\DC {-1}$ says that the composition of an arbitrary tower is a surjection. This holds if and only if $\cE=1$.

\item\label{ex:DC0} The axiom $\DC 0$ says that the composition of a tower of surjections is a surjection. 
In a 1-topos, this is equivalent to the usual statement of dependent choice expressed in the internal language \cite[Proposition 3.35]{Mejak:Choice}.

Jensen proved \cite{Jensen:ZF} that there is a model of Zermelo--Fraenkel set theory that satisfies countable choice but not dependent choice \cite[Theorem 8.12]{Jech:AoC}.
An example of a 1-topos where $\CC 0$ holds but not $\DC 0$ is given in \cite[Section 4.3]{Mejak:Choice}.

\item\label{ex:DC-cube}
The $d$\=/cube $[0,1]^d$ of \cref{ex:cube} has enough objects of homotopy dimension $\leq d$ and satisfies $\DC d$ by \cref{prop:enough-DCd} below.
However, it does not satisfy $\DC {d-1}$, since we saw in \cref{ex:cube} that it does not satisfy $\CC {d-1}$.

\item\label{ex:DC-for-HC}
An argument similar to that of \cref{prop:CC-for-hc} would show that if $\DC d$ holds in an \oo topos $\cE$, then it holds in its hypercompletion $\cE\truncated\infty$.
The \oo topos of \cref{rem:app-hc} gives a counterexample to the converse.

\end{exmpenum}
\end{examples}

\begin{question}
For $d > 0$, does there exist an $n$\=/topos which satisfies $\CC d$, but not $\DC d$?
\end{question}

\begin{proposition}
\label{prop:enough-DCd}
% \label{prop:bhd}
An $n$\=/topos with enough objects of homotopy dimension $\leq d$ satisfies $\DC d$, hence also $\CC d$.
\end{proposition}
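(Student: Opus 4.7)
My strategy is to reduce to a local problem in a slice of $\cE$ of homotopy dimension $\leq d$, translate the resulting question to $\cS$ via the global sections functor, and finally invoke dependent choice in the background set theory.

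First I would reduce to the case where $\cE$ itself is of homotopy dimension $\leq d$ and the base of the tower is the terminal object. By hypothesis there is a covering family $(P_i\to X_0)$ with each $P_i$ of homotopy dimension $\leq d$, so each slice $\cE\slice{P_i}$ is of homotopy dimension $\leq d$ by \cref{def:external-dim}. Surjectivity of the map $\lim X\index\to X_0$ can be detected after pullback along the jointly surjective family $\coprod_i P_i\surj X_0$, so it suffices to show each $(\lim X\index)\times_{X_0}P_i\to P_i$ is a surjection. In an \oo topos the pullback functor $(-)\times_{X_0}P_i$ is a two-sided adjoint (right adjoint to composition, left adjoint to the dependent-product $p_*$), so it commutes with the $\Tower$\=/limit and preserves $(d-1)$\=/connectedness; the base-changed tower is therefore a tower of $(d-1)$\=/connected maps with terminal base in the topos $\cE\slice{P_i}$ of homotopy dimension $\leq d$. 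After this reduction the goal becomes: in an \oo topos $\cE$ of homotopy dimension $\leq d$, a tower $X\index$ with $X_0=1$ and $(d-1)$\=/connected transition maps has inhabited limit.

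For this I would pass to $\cS$ via the global sections functor $\Gamma:\cE\to\cS$. Since $\Gamma$ is right adjoint to the unique algebraic morphism $\cS\to\cE$ and hence preserves limits, $\Gamma(\lim X\index)=\lim_k\Gamma(X_k)$, and it suffices to show that this inverse limit in $\cS$ is non-empty. By \cref{lem:decalage}, the hypothesis on $\cE$ says precisely that $\Gamma$ sends $(m+d)$\=/connected maps of $\cE$ to $m$\=/connected maps of $\cS$. Taking $m=-1$, each transition $\Gamma(X_{k+1})\to\Gamma(X_k)$ is $\pi_0$\=/surjective, and each $\Gamma(X_k)$ is non-empty since $X_k$ is itself $(d-1)$\=/connected (being a composition of $(d-1)$\=/connected maps ending at $X_0=1$).

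The essential step is then to deduce that a tower in $\cS$ of non-empty spaces with $\pi_0$\=/surjective transition maps has non-empty inverse limit. I would replace the tower with a Reedy fibrant model in simplicial sets, so that each transition becomes a Kan fibration of Kan complexes; such a fibration is $\pi_0$\=/surjective if and only if it is surjective on vertices, so dependent choice in the background set theory permits the inductive construction of a compatible sequence of vertices, providing a point of the inverse limit. This appeal to metatheoretic dependent choice in $\cS$ is the main technical content of the proof; every other step is formal.
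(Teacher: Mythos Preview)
Your proof is correct and follows essentially the same strategy as the paper: both cover the base $A_0$ by objects $P_i$ of homotopy dimension $\le d$ and then produce, for each $i$, a map $P_i \to \lim A\index$ lifting $P_i \to A_0$ by inductively lifting along the $(d-1)$\=/connected transitions, which forces $\lim A\index \to A_0$ to be a surjection. The paper phrases the inductive lift directly via the lifting characterization \cref{lem:hdim-le-iff}~\eqref{lem:hdim-le-iff:lift}, whereas you unpack the same step as $\pi_0$\=/surjectivity of the tower $\Gamma_{\cE\slice{P_i}}(A_k\times_{A_0}P_i)$ in $\cS$ together with metatheoretic dependent choice (via a Reedy fibrant model); your version makes explicit the coherence needed for the lifts to assemble into a cone, which the paper leaves implicit, but the underlying argument is the same.
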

\begin{proof}
Let $\dots \to A_1\to A_0$ be a tower of $(d-1)$\=/connected maps.
We want to show that the composition $\lim A\index\to A_0$ is a surjection.
By assumption on $\cE$, there exists a covering family $(q_i: X_i\to A_0)$ where the $X_i$ are objects of homotopy dimension $\leq d$.
For each $i$, using the fact that the maps $A_{k+1}\to A_k$ are $(d-1)$\=/connected and \cref{lem:hdim-le-iff}, we can construct for every $k \ge 0$ in turn a lift $X_i\to A_{k+1}$ of the previous map $X_i\to A_k$, and therefore a map $X_i\to \lim A\index$ whose composition with $\lim A\index \to A_0$ is the original map $q_i$.
Since the map $\coprod X_i\to A_0$ is a surjection and admits a factorization $\coprod X_i\to \lim A\index\to A_0$, the map $\lim A\index\to A_0$ must be a surjection (see \cite[Corollary 6.2.3.12~(2)]{Lurie:HTT}).
The last assertion is \cref{lem:DC-CC}.
\end{proof}

\begin{example}
\label{ex:DC0:proj}
The pro-\'etale 1-topoi have enough objects of homotopy dimension $\leq 0$ and therefore satisfy $\DC 0$ \cite[Proposition 4.2.8]{Bhatt-Scholze:proetale}.
In particular, the 1-topos of condensed sets satisfies $\DC 0$.
In these examples, we assume that the sites of definition are cut off at an uncountable strong limit cardinal, as described in \cite[Remark 4.1.2]{Bhatt-Scholze:proetale}, so as to obtain a Grothendieck topos with enough projective objects.

The \emph{light} condensed sets \cite[Video 2/24]{Clausen-Scholze:analytic} form the 1-topos of sheaves on the category of light profinite sets, those with only countably many clopen subsets, for the topology generated by finite jointly surjective families. This 1-topos does not have enough objects of homotopy dimension $\leq 0$, but still satisfies $\DC 0$ \cite[Video 3/24, 49']{Clausen-Scholze:analytic}.
\end{example}

\section{Proofs of the main theorems}

This section will use the terminology and notations of \cref{app:Postnikov}, which the reader is advised to read first.

\medskip
\Cref{thm:main,thm:CC:1-to-infty} will follow from the following lemmas.
We fix an \oo topos $\cE$.
Recall that an $n$\=/category $K$ is of homotopy $\cE$\=/dimension $\leq d$ if it satisfies the conditions of \cref{lem:equiv-HD}.

\begin{lemma}
\label{lem:dim}
If $\NN$ is of homotopy $\cE$\=/dimension $\leq d$, then the poset $\Tower$ is of homotopy $\cE$\=/dimension $\leq d+1$.
\end{lemma}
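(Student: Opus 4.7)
The plan is to reduce the assertion to a statement about limits of $d$\=/connected \emph{objects} in slices, using \cref{lem:equiv-HD}, and then invoke the classical presentation of a sequential limit as the equalizer of two endomaps of a countable product.

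First, by the equivalence \eqref{lem:equiv-HD:0}$\iff$\eqref{lem:equiv-HD:1:obj} in \cref{lem:equiv-HD} (applied with the dimension bound $d+1$), it suffices to show that for every object $E \in \cE$ and every tower $X\index : \Tower \to \cE\slice E$ of $d$\=/connected objects, the limit $\lim_\Tower X\index$ is inhabited in $\cE\slice E$. Fix such data, set $P := \prod_n X_n$ (the countable product computed in $\cE\slice E$), and note that the hypothesis on $\NN$ passes to the slice by \eqref{lem:equiv-HD:4}. Applying \eqref{lem:equiv-HD:2} with $m = 0$ to the $\NN$\=/indexed family of terminal maps $X_n \to 1$ in $\cE\slice E$ (each $d$\=/connected by hypothesis) then yields that $P$ is a $0$\=/connected object of $\cE\slice E$.

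Next, I would invoke the standard presentation
\[
\lim_\Tower X\index \;\simeq\; P \times_{P \times P} P,
\]
where the pullback is that of $(\mathrm{id}, \beta) : P \to P \times P$ against the diagonal $\Delta_P$, with $\beta$ the endomap of $P$ determined componentwise by the composites $P \xrightarrow{\pi_{n+1}} X_{n+1} \xrightarrow{f_n} X_n$ for $f_n$ the transition map (all constructions taken in $\cE\slice E$). Since $P$ is $0$\=/connected, its diagonal $\Delta_P$ is a surjection by the paper's definition of $m$\=/connectivity via iterated diagonals. Base change then shows that the projection $\lim_\Tower X\index \to P$ is a surjection, and composing with the surjection $P \to 1$ (which holds because $P$ is $0$\=/connected) shows that $\lim_\Tower X\index$ is inhabited in $\cE\slice E$, as needed.

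The one ingredient not supplied directly by the preceding results is the equalizer-product formula for the sequential limit, but this is a classical fact about \ooo categories with countable products and equalizers (provable from the analogous description in spaces via the Yoneda lemma), so I do not anticipate it being a real obstacle. The rest of the argument is just base-change stability of surjections combined with the elementary observation that $0$\=/connected objects have surjective diagonals.
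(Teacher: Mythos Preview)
Your proof is correct and follows essentially the same approach as the paper: both express the sequential limit as a pullback along the diagonal of the countable product $P = \prod_n X_n$, use the hypothesis on $\NN$ (passed to the slice) to deduce that $P$ is sufficiently connected, and conclude via base-change stability of connectivity. The only cosmetic difference is that the paper verifies condition~\eqref{lem:equiv-HD:2:obj} of \cref{lem:equiv-HD} for all $m$ simultaneously, whereas you verify the equivalent condition~\eqref{lem:equiv-HD:1:obj} (the case $m=-1$).
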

\begin{proof}
By \cref{lem:equiv-HD}~\eqref{lem:equiv-HD:4} for $K=\NN$, $\NN$ is also of homotopy $\cE\slice E$\=/dimension $\leq d$ for every $E \in \cE$.
We will check \cref{lem:equiv-HD}~\eqref{lem:equiv-HD:2:obj} for $K=\Tower$.
We show that, in any slice $\cE\slice E$, if countable products of $(m+d+1)$\=/connected objects are $(m+1)$\=/connected, then $\Tower$\=/limits of $(m+d+1)$\=/connected objects are $m$\=/connected.

Recall that $\Tower$ is the free category on the graph $s,i:\NN\rightrightarrows \NN$, where $i$ is the identity and $s$ the successor map.
Then the limit of a diagram $\dots \to X_1\to X_0$ in $\cE\slice E$ is equivalent to the equalizer of the pair
$s,i:\prod X_k\rightrightarrows \prod X_k $, where $i$ is the identity map and $s$ is induced by the maps $X_{k+1}\to X_k$.
Equivalently, this equalizer is the pullback of the diagram
\[
\begin{tikzcd}
\lim X_k \ar[rr]\ar[d]\pbmarkk && \prod X_k \ar[d,"\Delta"]\\
\prod X_k \times \prod X_k \ar[rr,"{(s,i)}"] && \prod X_k \times \prod X_k\,.
\end{tikzcd}
\]
When each $X_k$ is an $(m+d+1)$\=/connected object, their product $\prod X_k$ is an $(m+1)$\=/connected object by assumption on $\NN$, and the diagonal of $\prod X_k$ is an $m$\=/connected map.
The map $\lim X_k\to \prod X_k \times \prod X_k$ is $m$\=/connected by base change,
so the map $\lim X_k\to 1$ is $m$\=/connected by composition.
\end{proof}

Recall that we denote the Postnikov tower of an object $X$ by $\dots \to X\truncated 1\to X\truncated 0$.

\begin{lemma}
\label{lem:hc=limpt}
If $\Tower$ has finite homotopy $\cE$\=/dimension, then, for every object $X$ in $\cE$, the map $X\to \lim X\truncated k$ is the hypercompletion of $X$.
\end{lemma}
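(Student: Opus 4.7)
The plan is to identify $X\to\hat X:=\lim_k X\truncated k$ as an \oo connected map to a hypercomplete object, which characterizes it up to canonical isomorphism as the hypercompletion of $X$. So I would check two things separately: that $\hat X$ is hypercomplete, and that the canonical map $f:X\to\hat X$ is \oo connected.

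Hypercompleteness of $\hat X$ is the easy half. Each Postnikov piece $X\truncated k$ is $k$\=/truncated and hence right orthogonal to every $k$\=/connected map, a fortiori to every \oo connected map; so each $X\truncated k$ is hypercomplete. Since hypercompleteness is defined by a right orthogonality condition, it is preserved under all limits in $\cE$, so $\hat X$ is hypercomplete.

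For the \oo connectedness of $f$, I would prove that $f$ is $m$\=/connected for each $m\ge -2$. Let $D$ be the finite homotopy $\cE$\=/dimension of $\Tower$. Fix $m$, choose $N\ge m+D$, and set $T:=\{k\ge N\}\subseteq\Tower$. As a poset, $T$ is isomorphic to $\Tower$ via the shift $k\mapsto k-N$, so $T$ also has homotopy $\cE$\=/dimension $\le D$. The key observation is that passing from $\Tower$ to $T$ does not change limits: given a diagram $F:\Tower\to\cE$ and a cone on $F|_T$, the transitions $F(N)\to F(k)$ for $k<N$ produce a unique extension to a cone on $F$. Hence $\hat X=\lim_T X\truncated k$, and the constant $T$\=/diagram on $X$ has limit $X$ (since $T$ has a terminal object $N$, its nerve is contractible). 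The Postnikov truncation maps $X\to X\truncated k$ for $k\in T$ assemble into a morphism in $\cE^T$ from the constant $T$\=/diagram on $X$ to $(X\truncated k)_{k\in T}$, whose components are all $k$\=/connected and hence $(m+D)$\=/connected. Applying \cref{lem:equiv-HD} to $T$ then yields that the limit morphism in $\cE$, namely $f:X\to\hat X$, is $m$\=/connected.

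The main obstacle is that the individual truncation maps $X\to X\truncated k$ are only $k$\=/connected, so the homotopy $\cE$\=/dimension bound for $\Tower$ cannot be applied directly to the entire Postnikov tower. The remedy is to restrict to a sufficiently high tail $T$; this works precisely because $\Tower$ is isomorphic to each of its tails, so the dimension bound transfers while the limit is unchanged.
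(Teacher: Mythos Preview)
Your proof is correct and follows essentially the same approach as the paper: show that $\hat X$ is hypercomplete as a limit of truncated objects, and show $X\to\hat X$ is \oo connected by bounding its connectivity at each level via the homotopy $\cE$\=/dimension of $\Tower$. The only cosmetic difference is that where you restrict to a tail $T=\{k\ge N\}\cong\Tower$, the paper equivalently reindexes the Postnikov tower by a shift $k\mapsto k+n$ and writes $(X\to Y)=\lim_k(X\to X\truncated{k+n})$; both maneuvers exploit the self-similarity of $\Tower$ in the same way.
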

\begin{proof}
Let $d$ be the homotopy $\cE$\=/dimension of $\Tower$.
We put $Y=\lim X\truncated k$.
For a fixed $n\geq d$, all the maps $X\to X\truncated {k+n}$ are $n$\=/connected.
The map $(X\to Y) = \lim \, ( X\to X\truncated {k+n} )$ is $(n-d)$\=/connected by assumption on $\Tower$.
Since $n$ was arbitrary, this shows the map is \oo connected.
Let $\cE\truncated\infty\subseteq \cE$ be the hypercompletion of $\cE$.
All truncated objects are in $\cE\truncated\infty$ and $\cE\truncated\infty$ is closed under limits, so $Y=\lim X\truncated k$ is in $\cE\truncated\infty$. 
This shows that the map $X\to Y$ is the hypercompletion of $X$.
\end{proof}

\begin{remark}
\label{rem:hc=limpt}
Putting together \cref{lem:dim,lem:hc=limpt}, we see that $\CC d$ implies that the hypercompletion of an object is given by the limit of its Postnikov tower.
This is a result interesting on its own.
It is equivalent to the convergence of Postnikov towers in $\cE\truncated\infty$, which is apparently weaker than the Postnikov completeness of $\cE\truncated\infty$ (i.e. weaker than \cref{thm:main}) though we do not know any counterexample (see \cref{app:Postnikov}).
The following lemma shows that $\CC d$ implies in fact a stronger result, of which \cref{thm:main} is a mere reformulation.
\end{remark}

\begin{lemma}
\label{lem:effectivity}
If $\Tower$ has finite homotopy $\cE$\=/dimension, then every Postnikov tower in $\cE$ is effective.
\end{lemma}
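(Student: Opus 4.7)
The plan is to verify that any Postnikov tower $Y\index = (\dots \to Y_2 \to Y_1 \to Y_0)$ in $\cE$ is the Postnikov tower of its limit $X := \lim Y\index$. Since each $Y_k$ is already $k$-truncated, it suffices to show that the canonical map $X \to Y_k$ is $k$-connected; then the universal property of $k$-truncation identifies $Y_k$ with $X\truncated k$ compatibly with the tower structure.

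Let $D$ denote the (finite) homotopy $\cE$-dimension of $\Tower$. Fix $k$ and take any $N \ge k$. View the tail $\dots \to Y_{N+2} \to Y_{N+1}$ as a $\Tower$-diagram in $\cE\slice{Y_N}$, whose value at position $j$ is the object $Y_{N+j} \to Y_N$ of $\cE\slice{Y_N}$. Since $\Tower$ is a connected category, the limit of this diagram in $\cE\slice{Y_N}$ is computed in $\cE$; by cofinality of the tail, this limit is the canonical map $X \to Y_N$. For each $j \ge 0$, the map $Y_{N+j} \to Y_N$ is a finite composition of the connecting maps $Y_{i+1} \to Y_i$ for $i \ge N$, each of which is $i$-connected since $Y_i = Y_{i+1}\truncated i$; hence each value of our slice diagram is an $N$-connected object of $\cE\slice{Y_N}$.

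Applying \cref{lem:equiv-HD}~\eqref{lem:equiv-HD:2:obj} with $K = \Tower$ inside the slice $\cE\slice{Y_N}$ and with $m = N - D$, we conclude that $X \to Y_N$ is $(N-D)$-connected. Setting $N := k + D$ makes $X \to Y_N$ a $k$-connected map; composing with the map $Y_N \to Y_k$, which is itself $k$-connected by the same argument applied to the Postnikov connecting maps, we deduce that $X \to Y_k$ is $k$-connected, as required. The only technical subtlety is the transfer of the $\Tower$-dimension hypothesis to the slice $\cE\slice{Y_N}$, which is covered by \cref{lem:equiv-HD}~\eqref{lem:equiv-HD:4}, together with the fact that the limit of a connected diagram is compatible with slicing; both are packaged into \cref{lem:equiv-HD}~\eqref{lem:equiv-HD:2:obj}.
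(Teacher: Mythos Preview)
Your proof is correct and follows essentially the same strategy as the paper's: both compute the limit of the tail $\dots \to Y_{N+1} \to Y_N$ and use the homotopy $\cE$-dimension hypothesis on $\Tower$ to bound the connectivity of the resulting map $X \to Y_N$, then vary $N$. The only cosmetic difference is that the paper works in the arrow category (viewing $X \to X_n$ as the limit of the constant-codomain family of $n$-connected maps $X_{k+n} \to X_n$ and invoking \cref{lem:equiv-HD}~\eqref{lem:equiv-HD:2} directly), whereas you pass to the slice $\cE\slice{Y_N}$ and invoke \cref{lem:equiv-HD}~\eqref{lem:equiv-HD:2:obj}; the paper then reads off $X\truncated{n-d} = X_{n-d}$ in one step rather than composing with $Y_{k+D} \to Y_k$.
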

\begin{proof}
Suppose $\Tower$ has homotopy $\cE$\=/dimension $\le d$, with $d < \infty$.
Let $\dots \to X_1\to X_0$ be a formal Postnikov tower in $\cE$,
and put $X=\lim X_k$.
For a fixed $n\geq d$, all the maps $X_{k+n}\to X_{n}$ are $n$\=/connected.
The map $(X\to X_n) =\lim \, ( X_{k+n}\to X_n )$ is $(n-d)$\=/connected by assumption on $\Tower$.
In particular, it induces an equivalence $X\truncated{n-d} = (X_n)\truncated{n-d} = X_{n-d}$.
Since $n$ was arbitrary, this shows the tower is effective.
\end{proof}

\begin{remark}
\label{rem:HoTT}
Putting together \cref{lem:dim,lem:effectivity}, we deduce that ``if $\CC d$ holds for some finite $d$, then Postnikov towers are effective''.
This statement can be formulated and proven in homotopy type theory in essentially the same way.
\end{remark}

We can now prove the two results of the introduction.

\begin{proof}[Proof of \cref{thm:main}]
The Postnikov completion $(-)\tower : \cE \to \Post\cE$ and the hypercompletion $P_\infty : \cE \to \cE\truncated\infty$ invert the same class of maps, namely the \oo connected maps.
If all Postnikov towers are effective and so $(-)\tower:\cE \to \Post \cE$ is a localization, then the canonical morphism $\cE\truncated\infty\to \Post\cE$ is an equivalence.
When $\cE$ satisfies $\CC d$ for some finite $d$, this holds by \cref{lem:dim,lem:effectivity}.
\end{proof}

\begin{remark}
\label{rem:main:post}
By \cref{thm:main,prop:CC-for-hc}, if $\CC d$ holds in $\cE$, it holds also in  $\Post\cE$.
\end{remark}

\begin{remark}
\label{rem:main:infty}
Recall from \cref{ex:CCd:infty} that every hypercomplete topos satisfies $\CC\infty$.
Since there exist hypercomplete topoi which are not Postnikov complete, this shows that \cref{thm:main} is false for $d=\infty$.
\end{remark}

\begin{remark}
\label{rem:main:infty:2}
Recall from \cite[Corollary A.7.2.6]{Lurie:SAG} that the subcategory of Postnikov complete \oo topoi is closed under colimits of geometric morphisms.
The \oo topos of sheaves on the cube $[0,1]^d$ is Postnikov complete because it is locally of homotopy dimension $\leq d$ \cite[Proposition 7.2.1.10]{Lurie:HTT}.
The \oo topos $\coprod_d\,[0,1]^d$ of \cref{ex:CCinfty:noCCd} is Postnikov complete as a colimit of Postnikov complete \oo topoi.
This shows that the converse of \cref{thm:main} is false.
Another counterexample is given by the \oo topos of \cref{prop:noCC}.
\end{remark}

\begin{remark}
\label{rem:main:slice}
By \cref{ex:CCd:slice}, the conclusion of \cref{thm:main} holds for all slices of $\cE$.
We do not know if the slices of Postnikov complete topoi are always Postnikov complete.
\end{remark}

We can apply \cref{thm:main} to get a proof of \cite[Proposition 7.2.1.10]{Lurie:HTT}. 
\begin{corollary}
\label{cor:lhd-PC}
Any \oo topos locally of homotopy dimension $\leq d$ is Postnikov complete.
\end{corollary}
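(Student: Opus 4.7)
The plan is to chain together three facts already established in the paper. First, by \cref{lem:lochd-hc}, any \oo topos $\cE$ locally of homotopy dimension $\leq d$ is hypercomplete, so the canonical map $\cE \to \cE\truncated\infty$ is an equivalence. Second, by the first paragraph of \cref{rem:enough}, $\cE$ has enough objects of homotopy dimension $\leq d$; hence by \cref{prop:bhd}, $\cE$ satisfies $\CC d$.

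With $\CC d$ in hand for a finite $d$, I would invoke \cref{thm:main} to conclude that the hypercompletion $\cE\truncated\infty$ and the Postnikov completion $\Post\cE$ coincide (i.e., the canonical functor $\cE\truncated\infty \to \Post\cE$ is an equivalence). Combining this with the fact that $\cE \to \cE\truncated\infty$ is already an equivalence, we deduce that $\cE \to \Post\cE$ is an equivalence, which is exactly the statement that $\cE$ is Postnikov complete.

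There is no real obstacle here: the corollary is essentially a straightforward assembly of the preceding lemmas. The only mild subtlety is noting that \cref{prop:bhd} requires only \emph{enough} objects of homotopy dimension $\leq d$, not \emph{locally} of such dimension, which is why \cref{rem:enough} is invoked as a first step. If one wished to avoid citing \cref{prop:bhd} (which itself is a consequence of \cref{prop:enough-DCd}), one could instead verify $\CC d$ directly from local homotopy dimension by building, for a countable product of $(d-1)$-connected maps, local sections on a cover by objects of homotopy dimension $\leq d$ using \cref{lem:hdim-le-iff}, but this would just reprove \cref{prop:bhd} in disguise.
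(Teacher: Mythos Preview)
Your proof is correct and follows essentially the same route as the paper's own proof: invoke \cref{lem:lochd-hc} for hypercompleteness, then \cref{prop:bhd} for $\CC d$, then \cref{thm:main} to identify the hypercompletion with the Postnikov completion. You are simply more explicit than the paper in citing \cref{rem:enough} to pass from ``locally of homotopy dimension $\leq d$'' to ``enough objects of homotopy dimension $\leq d$'' before applying \cref{prop:bhd}, which is a welcome clarification.
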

\begin{proof}
Let $\cE$ be such an \oo topos.
By \cref{lem:lochd-hc} we know that $\cE$ is hypercomplete.
By \cref{prop:bhd,thm:main}, $\cE$ is in fact Postnikov complete.
\end{proof}

\medskip

\begin{proof}[Proof of \cref{thm:CC:1-to-infty}]
Let $\cE_1$ be a 1-topos satisfying $\CC 0$;
we must show its \oo topos envelope $\cE$ also satisfies $\CC 0$.
Let $(A_k\to B_k)$ be a countable family of surjections in $\cE$.
We want to show that $p: \prod A_k\to \prod B_k$ is a surjection.
By construction of $\cE$, there exists a surjection $X\to \prod B_k$ by some object $X$ in $\cE_1$.
We put $C_k = X\times_{B_k}A_k$.
The map $C_k\to X$ is a surjection by base change.
Let $Y_k\to C_k$ be a cover by an object of $\cE_1$.
\[
\begin{tikzcd}
Y_k \ar[r, two heads] \ar[d, two heads] & C_k \ar[rr]\ar[d, two heads]  \pbmarkk && A_k\ar[d, two heads]\\
X \ar[r, equals] & X \ar[r, two heads,"u"] & \prod B_k \ar[r] & B_k
\end{tikzcd}
\qquad\qquad 
\begin{tikzcd}
\prod\slice X Y_k \ar[r]\ar[d, two heads] \pbmark &
\prod Y_k \ar[r]\ar[d, two heads]&\prod A_k \ar[d, "p"]\\
X \ar[r, "\Delta"]\ar[rr, two heads,bend right,"u"] &X^\NN \ar[r, two heads] & \prod B_k
\end{tikzcd}
\]
The composite map $Y_k\to C_k \to X$ is a surjection in $\cE$ between objects of $\cE_1$.
It is also a surjection in $\cE_1$ by \cref{lem:reflect-surj}.
The object $\prod\slice X Y_k$ is in $\cE_1$ since discrete objects are closed under limits.
By the hypothesis on $\cE_1$, the product map $\prod Y_k\to X^\NN$ is a surjection in $\cE_1$, and so is the map $\prod\slice X Y_k\to X$ by base change along $X\to X^\NN$.
It is also a surjection in $\cE$, since the embedding $\cE_1\to\cE$ preserves surjections.
The map $\prod\slice X Y_k\to X\to \prod B_k$ is a surjection as a composition of surjections.
Since it factors through $p : \prod A_k \to \prod B_k$, $p$ is also a surjection.
\end{proof}

\begin{remark}
\label{rem:DC:1-to-infty}
Using a similar argument, it is possible to show that if $\cE_1$ is a 1-topos where $\DC 0$ holds, then $\DC 0$ also holds in its \oo topos envelope $\cE$.
\end{remark}

\begin{question}
\label{rem:CC:d-to-infty}
Let $\cE$ be the \oo topos envelope of a $(d+1)$\=/topos $\cE_d$ satisfying $\CC d$ (or $\DC d$) for $d > 0$.
Does $\cE$ satisfy $\CC d$ (or $\DC d$)?
\end{question}

By putting together \cref{thm:main,thm:CC:1-to-infty} with \cref{ex:DC0}, we get a proof of \cite[Theorem A]{Mondal-Reinecke:Postnikov}.

\begin{corollary}
\label{cor:mondal-reinecke}
If a 1-topos $\cE_1$ satisfies $\CC 0$ (a fortiori $\DC 0$), then the hypercompletion of the \oo topos envelope of $\cE_1$ is Postnikov complete.
\end{corollary}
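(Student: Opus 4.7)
The plan is to combine the two main theorems of the paper in a direct pipeline, bridging the 1-topos hypothesis on $\cE_1$ to a statement about the hypercompletion of the \oo topos envelope $\cE$ of $\cE_1$. The real content is packaged inside \cref{thm:main,thm:CC:1-to-infty}, so the corollary itself should just be a short chain of implications.

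First I would reduce to the $\CC 0$ case, since $\DC 0 \Ra \CC 0$ by \cref{lem:DC-CC} (or \cref{ex:DCd:strength} combined with the definitions), which is exactly the ``a fortiori'' parenthetical in the statement. So from now on assume $\cE_1$ satisfies $\CC 0$. Next, I apply \cref{thm:CC:1-to-infty} to transport this axiom up to the \oo topos envelope: the \oo topos $\cE$ also satisfies $\CC 0$. Now $\cE$ is an \oo topos satisfying $\CC d$ for $d = 0 < \infty$, so \cref{thm:main} applies and tells us that the hypercompletion $\cE\truncated\infty$ coincides with the Postnikov completion $\Post\cE$.

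To conclude, I would invoke the fact (built into the definition of Postnikov completion recalled in \cref{app:Postnikov}) that $\Post\cE$ is always Postnikov complete. Transporting this along the equivalence $\cE\truncated\infty \simeq \Post\cE$ from the previous step, the hypercompletion $\cE\truncated\infty$ of the \oo topos envelope of $\cE_1$ is Postnikov complete, which is exactly the claim.

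There is no serious obstacle in the argument itself; all the weight is already carried by \cref{thm:CC:1-to-infty} (propagating $\CC 0$ from a 1-topos to its \oo topos envelope) and \cref{thm:main} (deducing that hypercompletion equals Postnikov completion from a finite $\CC d$). The only point that deserves a moment of care is the last sentence, where one must remember that ``Postnikov completion'' is by construction Postnikov complete, so that the coincidence of hypercompletion and Postnikov completion given by \cref{thm:main} really does yield Postnikov completeness of $\cE\truncated\infty$, and not merely the convergence of Postnikov towers noted in \cref{rem:hc=limpt}.
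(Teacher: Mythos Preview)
Your proposal is correct and follows essentially the same route as the paper: the paper's own proof is a single sentence invoking \cref{thm:CC:1-to-infty} and \cref{thm:main} (together with the implication $\DC 0 \Rightarrow \CC 0$), and your argument simply unpacks this chain of implications. Your final step could be streamlined slightly by citing \cref{app:lem:Postnikov}~\eqref{app:lem:Postnikov:eff}, which states directly that $\cE\truncated\infty = \Post\cE$ is equivalent to $\cE\truncated\infty$ being Postnikov complete, but your formulation via ``$\Post\cE$ is always Postnikov complete'' is equally valid.
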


% \appendix
\section{A topos without \texorpdfstring{$\CC\infty$}{countable choice of dimension infinity}}
\label{sect:noCC}

This section gives an example of an \oo topos where the axiom $\CC d$ fails for every $d \le \infty$ (\cref{prop:noCC}).
As a technical step, we start by giving an example of an \oo connected object with no global section.
We shall use the object introduced in \cite[Example A.10]{DHI}, but following the exposition of \cite[11.3]{Rezk:topos}.
We denote by $\cR$ the \oo topos constructed there and by $R$ the distinguished \oo connected object.
Rezk proves that the space of global sections of $R$ is (noncanonically) equivalent to the zeroth space of the sphere spectrum, $\Gamma(R) \simeq QS^0$.
Let $a,b:1\to R$ be two global sections in the classes of 0 and 1 in $\pi_0(\Gamma(R))=\ZZ$. 
We denote by $\Omega_{a,b}R$ the corresponding path object (fiber product) in $\cR$.
Because $R$ is \oo connected, so is $\Omega_{a,b}R$, but its space of global sections is now empty:
\[
\Gamma(\Omega_{a,b}R)
\ =\ \Omega_{0,1}\Gamma(R)
\ =\ \Omega_{0,1}QS^0
\ =\ \emptyset\,.
\]

More generally, such \oo connected objects can be build from any non-trivial \oo connected object.
Let $\cE$ be an \oo topos with a non-trivial \oo connected object $E$.
There exists $X \in \cE$ such that $\Map(X, E)$ is not contractible.
By replacing $\cE$ by $\cE\slice X$ and $E$ by $E\times X\to X$, we may assume $X = 1$.
If the space of global sections $\Gamma(E)$ is already empty,
then $E$ is the sought object.
Otherwise, choose a global section to serve as basepoint of $E$.
There exists an $0\leq n<\infty$ such that $\Gamma(E)$ is $(n-1)$\=/connected but not $n$\=/connected.
In other words, there exist two pointed maps $a,b:S^n\to E$
which lie in distinct components of $\Gamma(\Omega^n E)$.
The object we are looking for is the path space $\Omega_{a,b}(\Omega^n E)$.
(This is really \cref{lem:hdim-le-iff} \eqref{lem:hdim-le-iff:section} $\Ra$ \eqref{lem:hdim-le-iff:section-m} with $m = d = \infty$, $X = 1$.)

\medskip
Let $\Fin\subseteq \cS$ be the full subcategory of finite spaces (the spaces that can be obtained from the point by finite colimits).
The algebraically free \oo topos on one generator is $\S X = \Fun\Fin\cS$ and the universal object $X$ is the canonical inclusion $\Fin \to \cS$ \cite[Definition 2.2.6]{ABFJ:GT}.
For any \oo topos $\cE$, the category of algebraic morphisms $\S X \to \cE$ is equivalent to $\cE$.
There exists a topological localization $\S X \to \S {X\infconn}$ forcing the universal object $X$ to become \oo connected \cite[Example 3.1.3~(c) and Example 4.1.6~(d)]{ABFJ:GT}.
This means that for any \oo topos $\cE$, the category of algebraic morphisms $\S {X\infconn} \to \cE$ is equivalent to the full subcategory of $\cE$ spanned by the \oo connected objects.

\begin{lemma}
\label{lem:noGamma}
The universal \oo connected object $X\infconn:\Fin \to \cS$ satisfies $X\infconn(\emptyset)=\emptyset$.
\end{lemma}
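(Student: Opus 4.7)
The plan is to interpret $X\infconn(\emptyset)$ as a space of global sections of $X\infconn$ and then transport emptiness along the universal property, using the $\infty$\=/connected object $\Omega_{a,b}R \in \cR$ just constructed.

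First, I would apply Yoneda: $X\infconn(\emptyset) = \Map_{\S X}(y(\emptyset), X\infconn)$, where $y: \Fin\op \to \S X$ is the Yoneda embedding. Since $\emptyset$ is initial in $\Fin$, the presheaf $y(\emptyset)$ sends every object to the one\=/point space $\mathrm{Hom}_{\Fin}(\emptyset, -)$, so $y(\emptyset)$ coincides with the terminal object $1$ of $\S X$. This identifies $X\infconn(\emptyset)$ with $\Gamma_{\S X}(X\infconn)$, and because the inclusion $\iota: \S{X\infconn} \hookrightarrow \S X$ is fully faithful with a left\=/exact left adjoint (which therefore preserves $1$), this in turn coincides with $\Gamma_{\S{X\infconn}}(X\infconn)$.

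Next I would invoke the universal property of $\S{X\infconn}$. Since $\Omega_{a,b}R$ is $\infty$\=/connected in $\cR$, it corresponds to an essentially unique algebraic morphism $f^*: \S{X\infconn} \to \cR$ sending $X\infconn$ to $\Omega_{a,b}R$. As $f^*$ is left exact it preserves the terminal object, so its action on mapping spaces yields a map
\[ \Gamma_{\S{X\infconn}}(X\infconn) \longrightarrow \Gamma_\cR(\Omega_{a,b}R) = \emptyset. \]
Since the only space admitting a map to $\emptyset$ is $\emptyset$ itself, $\Gamma_{\S{X\infconn}}(X\infconn) = \emptyset$ and hence $X\infconn(\emptyset) = \emptyset$.

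The only real conceptual step is the identification $y(\emptyset) = 1$ coming from the initiality of $\emptyset$ in $\Fin$, which turns evaluation at $\emptyset$ into a global sections functor; once this is in hand, the universal property of $\S{X\infconn}$ immediately pulls the emptiness of $\Gamma(\Omega_{a,b}R)$ back to $X\infconn(\emptyset)$, so I do not anticipate any serious obstacle.
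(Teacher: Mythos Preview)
Your proof is correct and follows essentially the same approach as the paper: both identify $X\infconn(\emptyset)$ with the space of global sections (via the observation that $\emptyset$ is initial in $\Fin$, equivalently terminal in $\Fin\op$), and then use the universal property to transport a global section of $X\infconn$ to a global section of the object $\Omega_{a,b}R$, contradicting $\Gamma(\Omega_{a,b}R)=\emptyset$. Your write-up simply makes the Yoneda step and the passage between $\S X$ and $\S{X\infconn}$ more explicit than the paper does.
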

\begin{proof}
The value $X\infconn(\emptyset)$ is the space of global sections, since $\emptyset$ is terminal as an object of $\Fin\op$.
If this space was non-empty, every \oo connected object of every \oo topos would have at least one global section.
Since we have seen that there exist \oo connected objects with no global sections, we must have $X\infconn(\emptyset)=\emptyset$.
\end{proof}

Let $\Fin^{(\NN)}\subseteq \Fin^\NN$ be the full subcategory of sequences of finite spaces whose values at all but finitely many indices are the empty space.
The algebraically free \oo topos $\S {X_0,X_1,\dots}$ on a countable set of generators is $\Fun{\Fin^{(\NN)}}\cS$.
We consider its localization $\cE=\S {X_0\infconn,X_1\infconn,\dots}$ forcing all the $X_k$ to become \oo connected.
The object $X_k\infconn$ is the functor $X\infconn\circ p_k:\Fin^{(\NN)}\to \Fin\to \cS$
where $p_k$ is the projection to the $k$th component.

\begin{proposition}
\label{prop:noCC}
The axiom $\CC \infty$ (and thus any $\CC d$ or $\DC d$) is false in $\cE$.\end{proposition}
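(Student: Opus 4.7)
The plan is to apply $\CC\infty$ directly to the countable family of \oo connected maps $X_k\infconn\to 1$, $k\in\NN$, and contradict it by showing that their product $P := \prod_{k\in\NN} X_k\infconn$ is the initial object of $\cE$, so that $P\to 1$ cannot be a surjection in the (nontrivial) topos $\cE$.

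Since the inclusion $\cE \hookrightarrow \Fun{\Fin^{(\NN)}}{\cS}$ is right adjoint to the localization, it preserves limits, and in particular $P$ coincides with the pointwise product in $\Fun{\Fin^{(\NN)}}{\cS}$. For any $K = (K_0,K_1,\dots)\in\Fin^{(\NN)}$ we then have $P(K) = \prod_k X\infconn(K_k)$. By the definition of $\Fin^{(\NN)}$, all but finitely many $K_i$ equal $\emptyset$, and by \cref{lem:noGamma}, $X\infconn(\emptyset) = \emptyset$. Hence infinitely many factors of $P(K)$ are empty, which forces $P(K)=\emptyset$ for every $K$. Thus $P$ is the constant functor at $\emptyset$; this constant functor is trivially orthogonal to every map, so it lies in any left-exact localization and coincides with the initial object $\emptyset_\cE$.

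It remains to check that $\cE$ is nontrivial, so that $\emptyset_\cE \ne 1_\cE$ and $\emptyset_\cE \to 1_\cE$ is not a surjection. The universal property of $\cE$ produces an algebraic morphism $\cE \to \cR$ sending each $X_k\infconn$ to Rezk's \oo connected object $R$, and if $\cE$ were trivial this morphism would collapse $R$ to $1_\cR$, contradicting the non-triviality of $R$ established at the beginning of this section. Since $\CC d\Ra\CC\infty$ and $\DC d\Ra\CC d$, all the axioms $\CC d$ and $\DC d$ then fail in $\cE$. The only mildly subtle step is the pointwise description of $P$, which relies only on the general fact that reflective left-exact localizations preserve limits via the inclusion, combined with the elementary observation that any product containing an empty factor is empty.
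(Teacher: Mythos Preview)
Your argument is correct and matches the paper's: compute the product $\prod_k X_k\infconn$ pointwise in the presheaf category using that the inclusion preserves limits, observe it is the constant-$\emptyset$ functor via \cref{lem:noGamma}, and conclude it is initial in $\cE$. One minor correction: the initial presheaf is \emph{not} orthogonal to every map (e.g.\ not to $\emptyset\to 1$), but that step is superfluous anyway, since $P$ already lies in $\cE$ as a product of objects of $\cE$, and an object of a full subcategory that is initial in the ambient category is automatically initial in the subcategory; the paper leaves the nontriviality of $\cE$ implicit, and your check via the universal property is a reasonable addition.
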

\begin{proof}
In fact, we show that the countable product $\prod  X_k\infconn$ is the initial object of $\cE$.
This product can be computed in the category of functors $\Fin^{(\NN)}\to \cS$.
Let $A\index$ be an object in $\Fin^{(\NN)}$.
There exists some index $n$ such that $A_n=\emptyset$, so $(\prod  X_k\infconn) (A\index) = \prod  X\infconn(A_k) = \emptyset$.
Since $A\index$ was arbitrary, this shows that the functor $\prod  X_k\infconn:\Fin^{(\NN)}\to \cS$ is the initial functor, 
so it must also be the initial object of $\cE$.
\end{proof}

\begin{remark}
\label{rem:app-hc}
The hypercompletion of $\S {X\infconn}$ is $\cS$:
the canonical algebraic morphism $\S {X\infconn}\to \cS$ sending ${X\infconn}$ to 1 is a cotopological localization since it is generated by inverting the \oo connected map ${X\infconn}\to 1$, and because $\cS$ is hypercomplete, it must be the maximal cotopological localization, that is, the hypercompletion of $\S {X\infconn}$.
A similar argument shows that the hypercompletion of $\cE$ is also $\cS$.
Since $\cS$ is Postnikov complete, this shows that the converse of \cref{thm:main} is false.
This also provides a counterexample to the converse of \cref{prop:CC-for-hc}.
\end{remark}

\appendix
\section{Postnikov separation, convergence, effectivity, and completeness}
\label{app:Postnikov}

This appendix summarizes the various conditions of convergence on Postnikov towers and introduces some terminology.
The original material is in \cite[Section 5.5.6]{Lurie:HTT}.

\medskip
The conventions on truncation are at the beginning of the paper.
We fix a presentable category $\cC$.
The \emph{Postnikov completion} of $\cC$ is the limit $\Post\cC$ of the tower $\dots \to \cC\truncated 1\to \cC\truncated 0$, computed in the category of categories, with transition functors given by truncation.
The category $\Post\cC$ is presentable.
Its objects are towers $X\index:\Tower\to\cC$ such that the map $X_{k+1}\to X_k$ identifies $X_k$ with the $k$\=/truncation of $X_{k+1}$.
We shall call such a tower a \emph{formal Postnikov tower} (\cite{Lurie:HTT} calls it a Postnikov pretower).

The \emph{Postnikov tower} $X\tower$ of an object $X$ is defined as the tower of truncations $\dots \to X\truncated 1\to X\truncated 0$.
This defines a functor $(-)\tower:\cC\to\Post\cC$ whose right adjoint sends a tower to its limit.
For any object $X$ in $\cC$, the unit of the adjunction $(-)\tower:\cC\rightleftarrows \Post\cC:\lim$ is the canonical map $X\to \lim_n X\truncated n$,
while for any formal Postnikov tower $X\index$ with limit $X$, the counit is the canonical morphism of towers $X\tower\to X\index$.

A map in $\cC$ is a \emph{Postnikov equivalence} if it is inverted by $\cC\to\Post\cC$.
We shall say that an object is \emph{\oo truncated} if it is local for Postnikov equivalences.
The \emph{Postnikov separation} of $\cC$ is defined as the localization $\cC\to \cC\hred$ with respect to Postnikov equivalences.
The category $\cC\hred$ is equivalent to the full subcategory spanned by \oo truncated objects.
This gives a factorization
\[
\begin{tikzcd}[sep = small]
\cC \ar[r]
& \cC\hred \ar[r]
& \Post\cC = \lim\, \cC\truncated n\,,
\end{tikzcd}
\]
where the first functor is a localization and the second functor is conservative. 
The truncations of an object $X$ and their limit fit into a diagram
\[
\begin{tikzcd}[sep=small]
X   \ar[r]
&X\truncated\infty
    \ar[r]
&\lim_n X\truncated n \ar[r]
&\dots\ar[r] 
& X\truncated 1
    \ar[r]
& X\truncated 0 
    \ar[r]
& X\truncated {-1}
    \ar[r]
&X\truncated {-2}=1\,.
\end{tikzcd}
\]

\begin{remark}
The factorization $\cC\to \cC\truncated\infty \to \Post\cC$ is typical of separation--completion processes. 
Other examples includes the separation--completion $A \to A/\bigcap I^n \to \lim A/I^n$ of a ring $A$ at an ideal $I$, 
or the separation--completion $X \to X/(d=0) \to \overline X$ of a pseudometric space $(X,d)$.
For this reason, we suggest replacing the terminology \emph{hypercompletion} by \emph{Postnikov separation}.
A similar terminology is used for prestable categories in \cite[Definition C.1.2.12]{Lurie:SAG}.
\end{remark}

\begin{remark}
When $\cC=\cE$ is an \oo topos,
for every $-2\leq n\leq\infty$, the localization $\cE\to\cE\truncated n$ is produced by inverting all $n$\=/connected maps.
In particular, the Postnikov separation $\cE\hred$ coincides with the hypercompletion of $\cE$ and is an \oo topos.
By \cite[Theorem A.7.2.4 and Proposition A.7.3.4]{Lurie:SAG}, the category $\Post \cE$ is an \oo topos and the functor $(-)\tower:\cE\to \Post\cE$ is an algebraic morphism of \oo topoi.
Moreover, the two functors of the factorization $\cE\to \cE\hred\to \Post\cE$ are algebraic morphisms of \oo topoi.
\end{remark}

\begin{definition}
\label{app:def:Postnikov-tower}
% A formal Postnikov tower $X\index$ is said to be
\begin{enumerate}
\item We say an object $X \in \cC$ \emph{has a convergent Postnikov tower}
if the unit map $X\to \lim_n X \truncated n$ is invertible,
i.e., if $X$ is the limit of its Postnikov tower.

\item We say a formal Postnikov tower $X\index$ is \emph{effective}
if the counit map $(\lim_n X_n)\tower\to X\index$ is invertible, i.e., if $X\index$ coincides with the Postnikov tower of its limit. 
\end{enumerate}
\end{definition}

\begin{definition}
\label{app:def:Postnikov}
A presentable category $\cC$ is said to be
\begin{enumerate}
\item \emph{Postnikov separated} if $\cC\to \Post\cC$ is conservative;
\item \emph{Postnikov convergent} if $\cC\to \Post\cC$ is fully faithful;
\item \emph{Postnikov effective} if $\cC\to \Post\cC$ is a localization (i.e. if  $\lim:\Post\cC\to \cC$ is fully faithful);
\item \emph{Postnikov complete} if $\cC\to \Post\cC$ is an equivalence.
\end{enumerate}
\end{definition}

\[
\begin{tikzcd}[sep=small]
&\text{Postnikov complete}\ar[rd,Rightarrow]\ar[ld,Rightarrow]\\
\text{Postnikov convergent}\ar[d,Rightarrow] && \text{Postnikov effective}\\
\text{Postnikov separated}
\end{tikzcd}
\]

\begin{lemma}
\label{app:lem:Postnikov}
A presentable category $\cC$ is:
\begin{enumerate}

\item\label{app:lem:Postnikov:sep} Postnikov separated
if and only if it is hypercomplete
($X=X\truncated\infty$, for every object $X$); 

\item\label{app:lem:Postnikov:conv} Postnikov convergent 
if and only if every object has a convergent Postnikov tower
($X=\lim X\truncated n$, for every object $X$); 

\item\label{app:lem:Postnikov:eff} Postnikov effective 
if and only if every formal Postnikov tower is effective,
if and only if $\cC\hred$ is Postnikov effective,
if and only if $\cC\hred$ is Postnikov complete,
if and only if $\cC\hred=\Post\cC$;

\item\label{app:lem:Postnikov:comp} Postnikov complete 
if and only if it is Postnikov convergent and Postnikov effective,
if and only if it is Postnikov separated and Postnikov effective.
\end{enumerate}
\end{lemma}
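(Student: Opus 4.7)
The plan is to reduce all four statements to the standard adjunction $(-)\tower \dashv \lim : \cC \rightleftarrows \Post\cC$ described in the text, whose unit at $X$ is the canonical map $X \to \lim_n X\truncated n$ and whose counit at a formal Postnikov tower $X\index$ is the canonical morphism $(\lim_n X_n)\tower \to X\index$. The general principles I will invoke repeatedly are: a left adjoint is fully faithful iff its unit is invertible; a right adjoint is fully faithful iff its counit is invertible; an adjoint pair is an equivalence iff both are invertible; and a reflective localization functor is conservative iff it is an equivalence.

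For \eqref{app:lem:Postnikov:sep}, the displayed factorization $\cC \to \cC\hred \to \Post\cC$ has a reflective localization as its first factor and a conservative functor as its second, so the composite is conservative iff the first factor is; since a reflective localization is conservative iff it is an equivalence, this reduces to $\cC = \cC\hred$, which is hypercompleteness. For \eqref{app:lem:Postnikov:conv}, $(-)\tower$ is fully faithful iff its unit $X \to \lim_n X\truncated n$ is invertible, which is precisely the convergence of every Postnikov tower. I will then prove \eqref{app:lem:Postnikov:comp} before \eqref{app:lem:Postnikov:eff}: a Postnikov complete category tautologically satisfies both the convergence and effectivity conditions, and conversely, convergent $+$ effective forces both unit and counit of the adjunction to be invertible, while separated $+$ effective forces $\cC\to\Post\cC$ to be a conservative reflective localization, hence an equivalence in either case.

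Part \eqref{app:lem:Postnikov:eff} is the most substantive. The equivalence between Postnikov effectivity and every formal Postnikov tower being effective is the counit criterion for $\lim$ to be fully faithful. The remaining equivalences rely on the canonical identification $\Post(\cC\hred) = \Post\cC$, which holds because $\cC$ and $\cC\hred$ share the same truncations $\cC\truncated n$, together with the universal property of $\cC \to \cC\hred$ as the localization at Postnikov equivalences: $\cC \to \Post\cC$ is a localization iff the induced functor $\cC\hred \to \Post\cC$ is an equivalence, i.e., iff $\cC\hred = \Post\cC$, i.e., iff $\cC\hred$ is Postnikov complete. Finally, since $\cC\hred$ is tautologically Postnikov separated, part \eqref{app:lem:Postnikov:comp} shows that for $\cC\hred$ the conditions ``Postnikov effective'' and ``Postnikov complete'' coincide. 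The main bookkeeping obstacle I anticipate is keeping straight which Postnikov completion functor (of $\cC$ or of $\cC\hred$) is meant in each step, but the identification $\Post\cC = \Post(\cC\hred)$ makes this essentially automatic.
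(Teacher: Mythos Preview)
Your proposal is correct and follows essentially the same approach as the paper. The paper's own proof is much terser: it declares that parts \eqref{app:lem:Postnikov:sep}--\eqref{app:lem:Postnikov:eff} are ``reformulations of the definitions'' and only spells out \eqref{app:lem:Postnikov:comp}, using exactly your argument that a conservative (or fully faithful) reflective localization is an equivalence. Your extra detail for \eqref{app:lem:Postnikov:eff}---the identification $\Post(\cC\hred)=\Post\cC$ via the shared truncations, and the universal property of $\cC\to\cC\hred$ as the localization at Postnikov equivalences---is precisely what is needed to make the paper's ``reformulation'' claim rigorous, and is the natural unpacking of it.
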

\begin{proof}
Most of the statements are reformulations of \cref{app:def:Postnikov-tower,app:def:Postnikov}.
We only give an argument for \eqref{app:lem:Postnikov:comp}.
If formal Postnikov towers are effective, the functor $\cC\to\Post\cC$ is a localization and is therefore an equivalence if and only if it is fully faithful or conservative.
\end{proof}

\begin{remark}
The first equivalence of \cref{app:lem:Postnikov}~\eqref{app:lem:Postnikov:comp} is Lurie's criteria for Postnikov completeness in \cite[Proposition 5.5.6.26]{Lurie:HTT}.
With the notations therein, the implication $(1)\Ra(2)$ is what we call ``Postnikov convergent'', and the implication $(2)\Ra(1)$ is what we call ``Postnikov effective''. 
\end{remark}

\begin{remark}
\Cref{app:lem:Postnikov}~\eqref{app:lem:Postnikov:eff} implies in particular that if a category $\cC$ is Postnikov effective, the hypercompletion of any object $X$ is the limit of its Postnikov tower: $X\truncated\infty=\lim X\tower$.
\end{remark}

\begin{remark}
\label{rem:example}
An example of an \oo topos which is hypercomplete (Postnikov separated) but not Postnikov convergent is detailed in \cite[Example 3.17]{Mondal-Reinecke:Postnikov}.
By \cref{app:lem:Postnikov}~\eqref{app:lem:Postnikov:comp}, this is also an example of an \oo topos which is not Postnikov effective.
An example of an \oo topos which is Postnikov effective but not hypercomplete is the \oo topos $\cS[X\infconn]$ classifying \oo connected objects (see \cref{rem:app-hc}).
We do not know an example of an \oo topos or even a presentable category which is Postnikov convergent but not Postnikov effective.
\end{remark}

\providecommand{\bysame}{\leavevmode\hbox to3em{\hrulefill}\thinspace}

\end{document}